\newtheorem{theorem}{\indent\sc Theorem}[section]
\newtheorem{corollary}[theorem]{\indent\sc Corollary}
\newtheorem{proposition}[theorem]{\indent\sc Proposition}
\newtheorem{definition}[theorem]{\indent\sc Definition}
\newtheorem{remark}[theorem]{\indent\sc Remark}
\newtheorem{example}[theorem]{\indent\sc Example}
\newenvironment{proof}{\paragraph{Proof:}}{\hfill$\square$}
\def\urlfont{\DeclareFontFamily{OT1}{cmtt}{\hyphenchar\font='057}
              \normalfont\ttfamily \hyphenpenalty=10000}
\DeclareFontFamily{OT1}{rsfs10}{}
\DeclareFontShape{OT1}{rsfs10}{m}{n}{ <-> rsfs10 }{}
\DeclareMathAlphabet{\mathscript}{OT1}{rsfs10}{m}{n}
\DeclareMathOperator{\Hom}{Hom}     % \Hom   = gruppo Hom (diritto)
\DeclareMathOperator{\Tors}{Tors}    % \Hom   = sottogruppo di torsione
\DeclareMathOperator{\Pic}{Pic}     % \Pic   = gruppo di Picard
\DeclareMathOperator{\Cl}{Cl}       % \Cl    = gruppo dei divisori di Weil mod. linear equivalence
\DeclareMathOperator{\rk}{rk}       % \rk    = rango
\DeclareMathOperator{\coker}{coker} % \coker = conucleo
\DeclareMathOperator{\Sing}{Sing}   % \Sing  = luogo singolare
\DeclareMathOperator{\codim}{codim} % \codim = codimensione
\DeclareMathOperator{\diag}{diag}   % \diag  = matrice diagonale
\DeclareMathOperator{\fan}{fan}     % \fan   = fan generato da ...
\DeclareMathOperator{\lcm}{lcm}     % \lcm   = minimo comun denominatore
\DeclareMathOperator{\HNF}{HNF}     % \HNF   = Hermite normal form
\DeclareMathOperator{\SNF}{SNF}     % \SNF   = Hermite normal form
\DeclareMathOperator{\REF}{REF}     % \REF   = Hermite normal form
\newtheorem{thm-def}[theorem]{Theorem--Definition}
\newcommand{\oneline}{\vskip12pt}
\newcommand{\halfline}{\vskip6pt}
\def \a{\alpha }
\def \b{\beta }
\def \d{\delta }
\def \Ga{\Gamma }
\def \Si{\Sigma }
\def \g{\gamma}
\def \v{\mathbf{v}}
\def \t{\mathbf{t}}
\def \x{\mathbf{x}}
\def \1{\mathbf{1}}
\def \0{\mathbf{0}}
\def\P{{\mathbb{P}}}
\def\p2{\mathbb{P}^2}
\def\p3{\mathbb{P}^3}
\def\p4{\mathbb{P}^4}
\def\rk{\operatorname{rk}}
\def\GL{\operatorname{GL}}
\def\Z{\mathbb{Z}}
\def\C{\mathbb{C}}
\def\R{\mathbb{R}}
\def\Q{\mathbb{Q}}
\def\N{\mathbb{N}}
\def\SF{\mathcal{SF}}
\def\G{\mathcal{G}}
\def\Ga{\Gamma}
\def\Weil{\mathcal{W}_T}
\def\Cart{\mathcal{C}_T}
\begin{document}

\title{A $\Q$--factorial complete toric variety is a quotient of a poly weighted space\thanks{The authors were partially supported by the MIUR-PRIN 2010-11 Research Funds ``Geometria delle Variet\`{a} Algebriche''. The first author is also supported by the I.N.D.A.M. as a member of the G.N.S.A.G.A.}
}
%\subtitle{Do you have a subtitle?\\ If so, write it here}

%\titlerunning{Short form of title}        % if too long for running head

\author{Michele Rossi         \and
        Lea Terracini %etc.
}

%\authorrunning{Short form of author list} % if too long for running head

\institute{Michele Rossi \at
              Dipartimento di Matematica - Universit\`a di Torino - Via Carlo Alberto 10 - 10123 Torino (Italy) \\
              Tel.: +39-011-6702813\\
              Fax: +39-011-6702878\\
              \email{michele.rossi@unito.it}           %  \\
%             \emph{Present address:} of F. Author  %  if needed
           \and
          Lea Terracini \at
                        Dipartimento di Matematica - Universit\`a di Torino - Via Carlo Alberto 10 - 10123 Torino (Italy) \\
                        Tel.: +39-011-6702813\\
                        Fax: +39-011-6702878\\
                        \email{lea.terracini@unito.it}
}

%\date{Received: date / Accepted: date}
% The correct dates will be entered by the editor

\maketitle

\begin{abstract}
We prove that every $\Q$--factorial complete toric variety is a finite abelian quotient of a poly weighted space (PWS), as defined in our previous work \cite{RT-LA&GD}. This generalizes the Batyrev--Cox and Conrads de\-scrip\-tion of a $\Q$--factorial complete toric variety of Picard number 1, as a finite quotient of a weighted projective space (WPS) \cite[Lemma~2.11]{BC} and \cite[Prop.~4.7]{Conrads}, to every possible Picard number, by replacing the covering WPS with a PWS. By Buczy\'nska's results \cite{Buczynska}, we get a universal picture of coverings in codimension 1 for every $\Q$--factorial complete toric variety, as  topological counterpart of the $\Z$--linear universal property of the double Gale dual of a fan matrix.

As a consequence we describe the bases of the subgroup of Cartier divisors inside the free group of Weil divisors and the bases of the Picard subgroup inside the class group, respectively, generalizing to every $\Q$--factorial complete toric variety the description given in \cite[Thm.~2.9]{RT-LA&GD} for a PWS.
\keywords{$\Q$-factorial complete toric varieties \and connectedness in codimension 1 \and Gale duality \and weighted projective spaces \and Hermite normal form \and Smith normal form}
% \PACS{PACS code1 \and PACS code2 \and more}
\subclass{ 14M25 \and 06D50}
\end{abstract}

\tableofcontents

\section*{Introduction}

The present paper is the second part of a longstanding tripartite study aimed at realizing, for $\Q$--factorial projective toric varieties, a classification inspired by what V.~Batyrev did in 1991 for smooth complete toric varieties \cite{Batyrev91}. The first part of this study is given by \cite{RT-LA&GD}, in which we studied Gale duality from the $\Z$--linear point of view and defined poly weighted spaces (PWS, for short: see the following Definition \ref{def:PWS}). The reader will often be referred to this work for notation, preliminaries and results.

In this paper, as a first application of $\Z$--linear Gale duality, we exhibit
\begin{itemize}
  \item[(i)] every $\Q$--factorial complete toric variety as a finite abelian quotient of a PWS (Theorem \ref{thm:covering&quotient}),
  \item[(ii)] a universal covering in codimension 1 theorem for every $\Q$--factorial complete toric variety (Corollary \ref{cor:universale}),
  \item[(iii)] explicit bases of the subgroup of Cartier divisors inside the free group of Weil divisors and of the Picard subgroup inside the class group, for every $\Q$--factorial complete toric variety (Theorem \ref{thm:generazione}).
\end{itemize}
The first result (i) generalizes, to every $\Q$--factorial complete toric variety, results \cite[Lemma~2.11]{BC} and \cite[Prop.~4.7]{Conrads} by V.~Batyrev--D.~Cox and H.~Conrads, respectively, in terms of PWS. Roughly speaking, by adopting the usual terminology for toric varieties of Picard number 1, called \emph{fake WPS} as suitable finite quotients of a weighted projective space (WPS, for short), such a generalization can be stated by saying that \emph{every $\Q$--factorial complete toric variety is a fake PWS}. Section \ref{sez:1covering} is entirely devoted to explaining this fact. The main result is given by Theorem~\ref{thm:covering&quotient}: the group action realizing the quotient is determined by the torsion subgroup $\Tors(\Cl(X))$ of the classes group $\Cl(X)$ of the given $\Q$--factorial complete toric variety $X$, and it is represented by a suitable \emph{torsion matrix} $\Ga$. In other words the codimension 1 structure of the $\Q$--factorial complete toric variety $X$ can be completely assigned by a \emph{weight matrix} $Q$, in the sense of Definition \ref{def:Wmatrice}, describing the covering PWS as a Cox's quotient, and by the torsion matrix $\Ga$.

The second result (ii) is obtained by the former (i), by making  significant use of W.~Buczynska's results presented in  \cite{Buczynska} and  briefly summarized in \ref{ssez:Buczynska}, since a PWS can be characterized as a \emph{1--connected in codimension 1} $\Q$--factorial complete toric variety. By $\Z$--linear Gale duality and the universal property (\ref{universalità}), we get a complete description of any covering in codimension 1 of a given $\Q$--factorial complete toric variety: this is Corollary \ref{cor:universale}.

 The so given description, of a $\Q$--factorial complete toric variety as a fake PWS, allows us to extend the description of the bases of the subgroup of Cartier divisors inside the free group of Weil divisors and of the Picard subgroup inside the class group, given in \cite[Thm.~2.9]{RT-LA&GD} for a PWS, to every $\Q$--factorial complete toric variety. This is the third result (iii): Section \ref{sez:W&C} is entirely devoted to this purpose. The main result in this context is given by Theorem~\ref{thm:generazione}, whose proof is essentially the same as that of \cite[Thm.~2.9]{RT-LA&GD}, although the torsion in $\Cl(X)$ makes the situation significantly more intricate.

For the most part, results stated in Theorem \ref{thm:generazione} are based on the algorithms giving Hermite and Smith normal forms of a matrix ($\HNF$ and $\SNF$, respectively), and associated switching matrices, which are well known algorithms (see e.g. \cite[Algorithms 2.4.4 and 2.4.14]{Cohen}) implemented in many computer algebra procedures. Theorem \ref{thm:generazione} gives quite effective and constructive me\-thods for producing large amounts of interesting information characterizing a given $\Q$--factorial complete toric variety $X$ (see Remark \ref{rem:} for further details).

Finally section \ref{sez:fan} is dedicated to reconstructing a fan matrix $V$, in the sense of Definition \ref{def:Fmatrice}, of a $\Q$--factorial complete toric variety $X$, assigned by means of a weight matrix $Q$ and a torsion matrix $\Ga$. Here, the main result is given by Theo\-rem~\ref{thm:da-quot-a-fan}, in which the fan matrix is obtained by Gale duality and by reconstructing the matrix $\beta$ characterizing the universal property (\ref{universalità}) and linking a fan matrix of $X$ with a fan matrix of the covering PWS.

The last section \ref{sez:Esempi} is devoted to giving several examples and applications of all the techniques described in the previous sections: examples are presented following the lines stated in Remark 3.3. Here it is rather important for the reader to be equipped with some computer algebra package which has the ability to produce Hermite and Smith normal forms of matrices and their switching matrices. For example, using Maple, similar procedures are given by \texttt{HermiteForm} and \texttt{SmithForm}  with their output options.

\section{Preliminaries and notation}

The present paper gives a first application of the $\Z$--linear Gale Duality developed in the previous paper \cite{RT-LA&GD}, to which the reader is referred for notation and all the necessary preliminary results. In particular concerning toric varieties, cones and fans, the reader is referred to \cite[\S~1.1]{RT-LA&GD}, for linear algebraic preliminaries about normal forms of matrices (Hermite and Smith normal forms - HNF and SNF for short) to \cite[\S~1.2]{RT-LA&GD}. $\Z$--linear Gale Duality and what is concerning fan matrices ($F$--matrices) and weight matrices ($W$-matrices) is developed in \cite[\S~3]{RT-LA&GD}.

Every time the needed nomenclature will be recalled either directly by giving the necessary definition or by reporting the precise reference. Here is a list of main notation and relative references:

\subsection{List of notation}\label{ssez:lista}
Let $X(\Si)$ be a $n$--dimensional toric variety and $T\cong(\C^*)^n$ the acting torus, then
\begin{eqnarray*}
% \nonumber to remove numbering (before each equation)
  &M,N,M_{\R},N_{\R}& \text{denote the \emph{group of characters} of $T$, its dual group and}\\
  && \text{their tensor products with $\R$, respectively;} \\
  &\Si\subseteq \mathfrak{P}(N_{\R})& \text{is the fan defining $X$; $\mathfrak{P}(N_{\R})$ denotes the power set of $N_{\R}$} \\
  &\Si(i)& \text{is the \emph{$i$--skeleton} of $\Si$, that is, the collection of all the}\\
  && \text{$i$--dimensional cones in $\Si$;} \\
  &\langle\v_1,\ldots,\v_s\rangle\subseteq\N_{\R}& \text{denotes the cone generated by the vectors $\v_1,\ldots,\v_s\in N_{\R}$;}\\
  && \text{if $s=1$ then this cone is also called the \emph{ray} generated by $\v_1$;} \\
  &\mathcal{L}(\v_1,\ldots,\v_s)\subseteq N& \text{denotes the sublattice spanned by $\v_1,\ldots,\v_s\in N$\,;}\\
&\mathcal{W}_T(X),\  \mathcal{C}_T(X),\  \mathcal{P}_T(X) &  \text{denote torus invariant  Weil, Cartier, principal divisors of $X$, resp.}
\end{eqnarray*}

Let $A\in\mathbf{M}(d,m;\Z)$ be a $d\times m$ integer matrix, then
\begin{eqnarray*}
% \nonumber to remove numbering (before each equation)
  &\mathcal{L}_r(A)\subseteq\Z^m& \text{denotes the sublattice spanned by the rows of $A$;} \\
  &\mathcal{L}_c(A)\subseteq\Z^d& \text{denotes the sublattice spanned by the columns of $A$;} \\
  &A_I\,,\,A^I& \text{$\forall\,I\subseteq\{1,\ldots,m\}$ the former is the submatrix of $A$ given by}\\
  && \text{the columns indexed by $I$ and the latter is the submatrix of}\\
  && \text{$A$ whose columns are indexed by the complementary }\\
  && \text{subset $\{1,\ldots,m\}\backslash I$;} \\
  &_sA\,,\,^sA& \text{$\forall\,1\leq s\leq d$\ the former is the submatrix of $A$ given by the}\\
  && \text{lower $s$ rows and the latter is the submatrix of $A$ given by}\\
  && \text{the upper $s$ rows of $A$;} \\
  &\HNF(A)\,,\,\SNF(A)& \text{denote the Hermite and the Smith normal forms of $A$,}\\
  && \text{respectively.}\\
  &\REF& \text{Row Echelon Form of a matrix;}\\
  &\text{\emph{positive}}& \text{a matrix (vector) whose entries are non-negative.}
\end{eqnarray*}
Given a $F$--matrix $V=(\v_1,\ldots,\v_{n+r})\in\mathbf{M}(n,n+r;\Z)$ (see Definition \ref{def:Fmatrice} below), then
\begin{eqnarray*}
% \nonumber to remove numbering (before each equation)
  &\langle V\rangle=\langle\v_1,\ldots,\v_{n+r}\rangle\subseteq N_{\R}& \text{denotes the cone generated by the columns of $V$;} \\
  &\SF(V)=\SF(\v_1,\ldots,\v_{n+r})& \text{is the set of all rational simplicial fans $\Si$ such that}\\
  && \text{$\Sigma(1)=\{\langle\v_1\rangle,\ldots,\langle\v_{n+r}\rangle\}\subseteq N_{\R}$ and $|\Si|=\langle V\rangle$ \cite[Def.~1.3]{RT-LA&GD}.}\\
  &\G(V)&=Q\ \text{is a \emph{Gale dual} matrix of $V$ \cite[\S~3.1]{RT-LA&GD};} \\
\end{eqnarray*}
\oneline
\noindent Let us start by recalling four fundamental definitions:

\begin{definition} A $n$--dimensional $\Q$--factorial complete toric variety $X=X(\Si)$ of rank $r$ is the toric variety defined by a $n$--dimensional \emph{simplicial} and \emph{complete} fan $\Si$ such that $|\Si(1)|=n+r$ \cite[\S~1.1.2]{RT-LA&GD}. In particular the rank $r$ coincides with the Picard number i.e. $r=\rk(\Pic(X))$.
\end{definition}

\begin{definition}[\cite{RT-LA&GD}, Def.~3.10]\label{def:Fmatrice} An \emph{$F$--matrix} is a $n\times (n+r)$ matrix  $V$ with integer entries, satisfying the conditions:
\begin{itemize}
\item[(a)] $\rk(V)=n$;
\item[(b)] $V$ is \emph{$F$--complete} i.e. $\langle V\rangle=N_{\R}\cong\R^n$ \cite[Def.~3.4]{RT-LA&GD};
\item[(c)] all the columns of $V$ are non zero;
\item[(d)] if ${\bf  v}$ is a column of $V$, then $V$ does not contain another column of the form $\lambda  {\bf  v}$ where $\lambda>0$ is real number.
\end{itemize}
A \emph{$CF$--matrix} is a $F$-matrix satisfying the further requirement
\begin{itemize}
\item[(e)] the sublattice ${\mathcal L}_c(V)\subseteq\Z^n$ is cotorsion free, that is, ${\mathcal L}_c(V)=\Z^n$ or, equivalently, ${\mathcal L}_r(V)\subseteq\Z^{n+r}$ is cotorsion free.
\end{itemize}
A $F$--matrix $V$ is called \emph{reduced} if every column of $V$ is composed by coprime entries \cite[Def.~3.13]{RT-LA&GD}.
\end{definition}
\begin{example}
{\rm The most significant example of $F$-matrix is given by a matrix $V$ whose columns are    integral vectors generating the rays of the $1$-skeleton $\Sigma(1)$ of a rational fan $\Sigma$. In the following $V$ will be called a \emph{fan matrix} of $\Sigma$; when every column of $V$ is composed by coprime entries, it will be called a \emph{reduced fan matrix}. For a detailed definition see
\cite[Def.~1.3]{RT-LA&GD} }.
\end{example}
\begin{definition}[\cite{RT-LA&GD}, Def.~3.9]\label{def:Wmatrice} A \emph{$W$--matrix} is an $r\times (n+r)$ matrix $Q$  with integer entries, satisfying the following conditions:
\begin{itemize}
\item[(a)] $\rk(Q)=r$;
\item[(b)] ${\mathcal L}_r(Q)$ does not have cotorsion in $\Z^{n+r}$;
\item[(c)] $Q$ is \emph{$W$--positive}, that is, $\mathcal{L}_r(Q)$ admits a basis consisting of positive vectors \cite[Def.~3.4]{RT-LA&GD}.
\item[(d)] Every column of $Q$ is non-zero.
\item[(e)] ${\mathcal L}_r(Q)$   does not contain vectors of the form $(0,\ldots,0,1,0,\ldots,0)$.
\item[(f)]  ${\mathcal L}_r(Q)$ does not contain vectors of the form $(0,a,0,\ldots,0,b,0,\ldots,0)$, with $ab<0$.
\end{itemize}
A $W$--matrix is called \emph{reduced} if $V=\G(Q)$ is a reduced $F$--matrix \cite[Def.~3.14, Thm.~3.15]{RT-LA&GD}
\end{definition}

\begin{definition}[\cite{RT-LA&GD} Def.~2.7]\label{def:PWS} A \emph{poly weighted space} (PWS) is a $n$--dimensional $\Q$--factorial complete toric variety $X(\Si)$ of rank $r$, whose reduced fan matrix $V$ is a $CF$--matrix i.e. if
\begin{itemize}
  \item $V=(\v_1,\ldots,\v_{n+r})$ is a $n\times(n+r)$ $CF$--matrix,
  \item $\Si\in\SF(V)$.
\end{itemize}
\end{definition}

\subsection{The fundamental group in codimension 1}\label{ssez:Buczynska}

This subsection is devoted to recall notation and results introduced in \cite[\S~3]{Buczynska} to which the interested reader is referred for any further detail.

\begin{definition}\label{def:pi11} The \emph{fundamental group in codimension $1$} of an irreducible, complex, algebraic variety $X$ is the inverse limit
\begin{equation*}
    \pi_1^1(X):= \varprojlim_{U\subseteq X} \pi_1(U)
\end{equation*}
where $U$ is an open, non-empty, algebraic subset of $X$ such that $\codim_{X}(X\setminus U)\geq 2$. $X$ is called \emph{$1$--connected in codimension $1$} if $\pi_1^1(X)$ is trivial.
\end{definition}

\begin{theorem}[\cite{Buczynska}, Thm. 3.4]\label{thm:pi11=pi1} Assume $X$ be a smooth variety and $V\subseteq X$ be a closed subset  with $\codim_{X}V\geq 2$. Then $\pi_1(X)=\pi_1(X\setminus V)$.
\end{theorem}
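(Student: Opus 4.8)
The plan is to prove that the inclusion $\iota\colon X\setminus V\hookrightarrow X$ induces an isomorphism $\iota_*\colon\pi_1(X\setminus V)\xrightarrow{\ \sim\ }\pi_1(X)$, from which the stated equality follows at once. Since $X$ is a smooth complex variety, of complex dimension $n$ say, it is a real manifold of dimension $2n$, and the hypothesis $\codim_X V\geq 2$ (complex codimension) means that $V$ is a closed subset of \emph{real} codimension at least $4$. The whole argument then rests on the classical principle that a closed subset of real codimension $\geq 3$ can be deleted from a manifold without altering $\pi_1$; here we even have codimension $4$, with room to spare.

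First I would dispose of the singularities of $V$. Being a closed analytic subset, $V$ carries a finite Whitney stratification $V=\bigsqcup_\alpha S_\alpha$ into smooth locally closed submanifolds, produced for instance by the singular filtration $V\supseteq\Sing(V)\supseteq\Sing(\Sing(V))\supseteq\cdots$; each stratum $S_\alpha$ is a smooth submanifold of $X$ of real codimension $\geq 4$, hence of real dimension $\leq 2n-4$. This reduces everything to transversality against smooth pieces.

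For surjectivity of $\iota_*$, fix a basepoint in the dense connected open set $X\setminus V$ and let $\gamma\colon S^1\to X$ be a loop. By Thom's transversality theorem $\gamma$ is homotopic rel basepoint to a map $\gamma'$ transverse to every $S_\alpha$; since $1+(2n-4)=2n-3<2n$, transversality forces $\gamma'$ to miss each stratum, so $\gamma'(S^1)\subseteq X\setminus V$ and $[\gamma]=\iota_*[\gamma']$ in $\pi_1(X)$. For injectivity, take a loop $\gamma$ in $X\setminus V$ bounding a disk $H\colon D^2\to X$; perturbing $H$ rel $\partial D^2$ to a map transverse to all strata, the count $2+(2n-4)=2n-2<2n$ again forces the image off $V$, so $\gamma$ already bounds in $X\setminus V$. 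Combining the two, $\iota_*$ is an isomorphism, which is the assertion.

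The delicate point is the transversality step in the presence of singularities of $V$. One must work with the Whitney stratification and a version of the transversality theorem for stratified targets, guaranteeing that a single small perturbation can be made transverse to all strata simultaneously (and then disjoint from each, by the dimension counts above), while keeping the perturbation inside the prescribed homotopy class --- rel basepoint for the loop and rel boundary for the bounding disk. Non-compactness of $X$ or $V$ is harmless, since the compact images of $\gamma$ and $H$ meet only finitely many strata.
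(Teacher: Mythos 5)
The paper itself offers no proof of this statement: it is quoted verbatim from Buczy\'nska (Thm.~3.4 of \cite{Buczynska}) and used as a black box, so there is no internal argument to compare yours against. Your transversality proof is the standard argument behind this classical fact, and it is essentially sound: the dimension counts are right (complex codimension $\geq 2$ gives real codimension $\geq 4$, while the argument only needs $\geq 3$), surjectivity via pushing loops off $V$ and injectivity via pushing bounding disks off $V$ are exactly the two things to check, and you correctly flag stratified transversality as the delicate point. One technical inaccuracy worth fixing: the filtration $V\supseteq\Sing(V)\supseteq\Sing(\Sing(V))\supseteq\cdots$ does \emph{not} by itself yield a Whitney stratification (Whitney's cusp family is the classical counterexample, where condition (b) fails along the singular axis and a point must be split off); one must either invoke the theorem that every algebraic variety admits a Whitney stratification refining this filtration, or --- more economically --- avoid stratified transversality altogether by induction on $\dim V$: set $V_1=\Sing(V)$, note that $V\setminus V_1$ is a closed \emph{smooth} submanifold of the smooth variety $X\setminus V_1$ of real codimension $\geq 4$, apply ordinary smooth transversality to conclude $\pi_1(X\setminus V)=\pi_1\bigl((X\setminus V_1)\setminus(V\setminus V_1)\bigr)\cong\pi_1(X\setminus V_1)$, and then recurse on $V_1$, whose dimension has dropped. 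With that adjustment your proof is complete and self-contained, which is arguably more than the paper provides.
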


\begin{corollary}[\cite{Buczynska}, Cor.3.9 and Cor.3.10] If $X$ is a normal variety then
$$\pi_1^1(X)=\pi_1(X\setminus \Sing(X))\ .$$
In particular if $X$ is smooth then $\pi_1^1(X)=\pi_1(X)$.
\end{corollary}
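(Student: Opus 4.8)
The plan is to use the fact that a normal variety is regular in codimension $1$, so that $\Sing(X)$ has codimension at least $2$, and then to show that the smooth locus $X\setminus\Sing(X)$ is cofinal in the directed system of opens computing $\pi_1^1(X)$. First I would invoke Serre's criterion (only the $R_1$ part is needed): normality gives $\codim_X\Sing(X)\geq 2$. Hence $U_0:=X\setminus\Sing(X)$ is itself an admissible open in the sense of Definition \ref{def:pi11}, and it is moreover smooth.

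Next I would note that the admissible opens form a directed system under reverse inclusion, because if $U_1$ and $U_2$ each have complement of codimension $\geq 2$ then so does $U_1\cap U_2$. Inside this system the subfamily of admissible opens contained in $U_0$ is cofinal: for any admissible $U$, the intersection $U\cap U_0$ is again admissible and lies in $U_0$. Therefore $\pi_1^1(X)$ may be computed as the inverse limit of $\pi_1(U)$ over this cofinal subfamily.

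The core step is then to apply Theorem \ref{thm:pi11=pi1} to the \emph{smooth} variety $U_0$. For each admissible $U\subseteq U_0$ the set $V:=U_0\setminus U$ is closed in $U_0$ with $\codim_{U_0}V\geq 2$, so the inclusion $U=U_0\setminus V\hookrightarrow U_0$ induces an isomorphism $\pi_1(U)\xrightarrow{\sim}\pi_1(U_0)$. I would check that these isomorphisms are compatible with the transition maps of the system (for $U'\subseteq U\subseteq U_0$ the triangle of inclusions commutes), so that the cofinal pro-group is constant equal to $\pi_1(U_0)$. The limit then collapses, yielding $\pi_1^1(X)=\pi_1(U_0)=\pi_1(X\setminus\Sing(X))$; the final assertion follows at once, since for $X$ smooth one has $\Sing(X)=\emptyset$ and thus $X\setminus\Sing(X)=X$.

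The main obstacle I anticipate is precisely the compatibility bookkeeping in this last step: one must verify that the isomorphisms furnished by Theorem \ref{thm:pi11=pi1} are genuinely induced by the inclusions, hence commute with the transition morphisms, so that the pro-object is honestly constant rather than merely levelwise isomorphic. Once this is secured, the collapse of the inverse limit is formal.
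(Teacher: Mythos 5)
Your proof is correct and follows essentially the same route as the paper: the paper's own justification (the remark immediately after the corollary) is precisely that, by Theorem \ref{thm:pi11=pi1}, the inverse limit of Definition \ref{def:pi11} has a realization on the smooth locus $X\setminus\Sing(X)$, which is an admissible open exactly because normality forces $\codim_X\Sing(X)\geq 2$. Your cofinality and transition-map bookkeeping just makes explicit what the paper leaves implicit.
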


\noindent In fact, by Theorem \ref{thm:pi11=pi1}, the inverse limit in the Definition \ref{def:pi11} has a realization on the smooth subset $X\setminus \Sing(X)$.

\begin{definition}\label{def:1-covering} A finite surjective morphism $\varphi:Y\rightarrow X$ is called a \emph{covering in codimension $1$} (or simply a \emph{$1$--covering}) if it is unramified in codimesion $1$, that is, there exists a subvariety $V\subseteq X$ such that $\codim_X V\geq 2$ and $\varphi|_{Y_V}$ is a topological covering, where $Y_V:=\varphi^{-1}(X\setminus V)$. Moreover a \emph{universal covering in codimension $1$} is a $1$-covering $\varphi:Y\rightarrow X$ such that for any $1$--covering $\phi:X'\rightarrow X$ of $X$ there exists a $1$--covering $f:Y\rightarrow X'$ such that $\varphi=\phi\circ f$.
\end{definition}

\begin{proposition}[\cite{Buczynska}, Rem. 3.14] A $1$--covering $\varphi:Y\rightarrow X$ is universal if and only if $Y$ is 1-connected in codimension 1 i.e. $\pi_1^1(Y)=0$.
\end{proposition}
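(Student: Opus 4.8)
The plan is to transfer the whole statement to ordinary covering space theory on the smooth locus, where the classical characterization ``a covering is universal if and only if its total space is simply connected'' is available, and then to carry the conclusion back across the codimension $\geq 2$ locus by normality. Concretely, I would first fix a closed subset $V\subseteq X$ with $\codim_X V\geq 2$ which contains $\Sing(X)$, the image $\varphi(\Sing(Y))$ and the ramification locus of $\varphi$, chosen large enough that, writing $U:=X\setminus V$ and $Y_U:=\varphi^{-1}(U)$, both $U$ and $Y_U$ are smooth and $\varphi\colon Y_U\to U$ is a genuine topological (finite, unramified) covering. Applying Theorem \ref{thm:pi11=pi1} to the smooth varieties $X\setminus\Sing(X)$ and $Y\setminus\Sing(Y)$ and to the codimension $\geq 2$ closed subsets cut out by $V$, together with the Corollary identifying $\pi_1^1$ with the $\pi_1$ of the smooth part, gives the two identifications $\pi_1^1(X)=\pi_1(U)$ and $\pi_1^1(Y)=\pi_1(Y_U)$. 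This reduces the proposition to the statement that the covering $\varphi\colon Y_U\to U$ is universal among restrictions of $1$--coverings if and only if $Y_U$ is simply connected.

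For the implication ($\Leftarrow$), I would assume $\pi_1^1(Y)=\pi_1(Y_U)=0$ and let $\phi\colon X'\to X$ be an arbitrary $1$--covering. After enlarging $V$ we may assume $\phi$ is a topological covering over $U$ as well; set $X'_U:=\phi^{-1}(U)$. Since $Y_U$ is simply connected, the lifting criterion for covering spaces produces a continuous lift $f_U\colon Y_U\to X'_U$ with $\phi\circ f_U=\varphi$, and $f_U$ is itself a covering map because $Y_U$ is connected and both maps are coverings. As $\phi$ is \'etale, $f_U$ is locally a composition of morphisms, hence automatically a morphism of varieties. The task is then to extend $f_U$ to a morphism $f\colon Y\to X'$ defined on all of $Y$: here I would use that $Y$ is normal, that $X'$ is complete, and that $Y\setminus Y_U$ has codimension $\geq 2$, so the rational map $Y\dashrightarrow X'$ determined by $f_U$ extends to an everywhere--defined morphism by the valuative criterion (a Hartogs--type argument). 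The extended $f$ is finite surjective and satisfies $\varphi=\phi\circ f$ on the dense open set $U$, hence everywhere; being a covering over $U$ it is a $1$--covering, so $\varphi$ is universal.

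For the implication ($\Rightarrow$), I would compare $\varphi$ with the canonical universal object. The topological universal covering $\widetilde U\to U$ is simply connected, and by the same normal--extension mechanism run in the opposite direction it is the restriction of a $1$--covering $\widetilde\varphi\colon\widetilde X\to X$ with $\widetilde X$ normal and $\pi_1^1(\widetilde X)=\pi_1(\widetilde U)=0$; note that in the present $\Q$--factorial complete toric setting $\pi_1^1(X)$ is finite, so $\widetilde\varphi$ is indeed a finite morphism as required of a $1$--covering. By the part already proved, $\widetilde\varphi$ is universal. Now universality of $\varphi$ yields a $1$--covering $f\colon Y\to\widetilde X$ with $\varphi=\widetilde\varphi\circ f$, and universality of $\widetilde\varphi$ yields a $1$--covering $g\colon\widetilde X\to Y$ with $\widetilde\varphi=\varphi\circ g$. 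Taking degrees in the two relations and multiplying gives $\deg f\cdot\deg g=1$, so $f$ and $g$ are degree one finite morphisms between normal varieties, hence isomorphisms. Therefore $Y\cong\widetilde X$ and $\pi_1^1(Y)=\pi_1^1(\widetilde X)=0$.

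The main obstacle throughout is precisely the passage across the codimension $\geq 2$ locus: everything topological happens on the smooth open $U$, but the definition of a $1$--covering is algebraic and global, so the crux is to guarantee that (a) a topological covering of $U$ is the restriction of an honest finite $1$--covering of $X$ (giving the existence and normality of $\widetilde X$), and (b) the lift $f_U$ built by the lifting criterion extends to a morphism on all of $Y$. Both rest on normality together with purity / Zariski--Nagata type extension of finite morphisms and the completeness of the target, and it is this algebraic extension, rather than the covering--space combinatorics, that constitutes the heart of the argument.
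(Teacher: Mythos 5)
The paper itself offers no proof of this proposition --- it is imported verbatim from Buczy\'nska (Rem.~3.14) --- so your attempt can only be measured against the standard argument; your overall plan (identify $\pi_1^1$ with $\pi_1$ of a common smooth open set $U$ with $\codim_X(X\setminus U)\geq 2$, run covering--space theory there, extend back across the boundary) is indeed that argument's skeleton. However, there is a genuine error at precisely the step you call the heart of the matter. You extend $f_U\colon Y_U\to X'_U$ to a morphism $f\colon Y\to X'$ by invoking ``normality of $Y$, completeness of $X'$, $\codim_Y(Y\setminus Y_U)\geq 2$, and the valuative criterion''. This is false as stated: the valuative criterion only controls codimension--one points, and a rational map from a smooth (a fortiori normal) variety to a \emph{complete} variety can have honest codimension--two indeterminacy --- e.g.\ $\P^2\dashrightarrow\P^1$, $[x_0:x_1:x_2]\mapsto[x_0:x_1]$, is undefined at $[0:0:1]$ and extends to no morphism, although the target is complete. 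What rescues the step is not completeness of $X'$ but \emph{finiteness} of $\phi\colon X'\to X$: a finite morphism is affine, so $X'=\Spec_X(\phi_*\mathcal{O}_{X'})$ and a factorization $f$ with $\phi\circ f=\varphi$ is the same thing as a homomorphism of $\mathcal{O}_X$--algebras $\phi_*\mathcal{O}_{X'}\to\varphi_*\mathcal{O}_Y$; the homomorphism you have over $U$ extends uniquely to $X$ because $Y$ is normal and $\codim$ of $\varphi^{-1}(W)\setminus Y_U$ in $\varphi^{-1}(W)$ is $\geq 2$, so restriction $\mathcal{O}_Y(\varphi^{-1}W)\to\mathcal{O}_Y(\varphi^{-1}W\cap Y_U)$ is bijective for every open $W\subseteq X$. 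Equivalently: $Y$, being normal and finite over $X$, is the normalization of $X$ in $K(Y)$; the topological lift $f_U$ supplies the inclusion $K(X')\hookrightarrow K(Y)$ over $K(X)$, and $f$ then comes from the universal property of normalization. Either repair is routine, but completeness is a red herring and the argument as written does not go through.

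A related, smaller gap sits in your converse direction: producing $\widetilde X$ from the topological universal covering $\widetilde U\to U$ is \emph{not} ``the same normal--extension mechanism run in the opposite direction'', because there is no morphism of varieties to extend --- one must first know that the finite topological covering $\widetilde U\to U$ is algebraic (Riemann existence / Grauert--Remmert), and only then define $\widetilde X$ as the normalization of $X$ in the corresponding function field; the finiteness of $\pi_1^1(X)$, which you rightly flag in the $\Q$--factorial complete toric setting, is what makes $\widetilde U\to U$ finite in the first place. With these two substitutions --- finiteness--plus--normalization in place of completeness--plus--valuative--criterion, and Riemann existence feeding the construction of $\widetilde X$ --- your proof becomes correct, and its structure (lifting criterion one way, comparison with the simply--connected--in--codimension--one model plus a degree count and Zariski's Main Theorem the other way) is essentially the intended one.
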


\begin{theorem}[$\pi_1^1$ for toric varieties \cite{Buczynska}, Thm. 4.8]\label{thm:Buczynska} Let $X=X(\Si)$ with $\Si$ be a fan in $N_{\R}$. For any ray $\rho\in\Si(1)$ let $\v_{\rho}$ be the generator of the monoid $\rho\cap N$ and consider the $\Z$-module
\begin{equation}\label{NSigma}
    N_{\Si(1)}:=\mathcal{L}\left(\v_{\rho}\ |\ \rho\in\Si(1)\right)
\end{equation}
as a subgroup of the lattice $N$. Then $\pi^1_1(X)\cong N/N_{\Si(1)}$\ .
\end{theorem}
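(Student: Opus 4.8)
The plan is to reduce the computation of $\pi_1^1(X)$ to the fundamental group of an explicit open toric subvariety and then apply Van Kampen. First I would invoke the Corollary stated above (Cor.~3.9--3.10 of the cited work): since $X$ is normal, $\pi_1^1(X)\cong\pi_1(X\setminus\Sing(X))$, so it suffices to understand the smooth locus $X_{\mathrm{sm}}:=X\setminus\Sing(X)$. This locus is itself an open toric subvariety, namely $X(\Sigma_{\mathrm{sm}})$, where $\Sigma_{\mathrm{sm}}\subseteq\Sigma$ is the subfan of those cones $\sigma$ that are smooth (generated by part of a $\Z$-basis of $N$). The \emph{crucial} observation is that every ray $\rho\in\Sigma(1)$ is a smooth cone, because its primitive generator $\v_\rho$ is by construction part of a $\Z$-basis of $N$; hence $\Sigma_{\mathrm{sm}}(1)=\Sigma(1)$ and no ray generator is lost in passing to the smooth locus.

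Next I would strip away everything of codimension $\geq 2$. The orbit--cone correspondence writes $X_{\mathrm{sm}}$ as the disjoint union of the orbits $O(\sigma)$ for $\sigma\in\Sigma_{\mathrm{sm}}$, and the orbits with $\dim\sigma\geq 2$ form a closed subset of codimension $\geq 2$ in the smooth variety $X_{\mathrm{sm}}$. By Theorem~\ref{thm:pi11=pi1} removing it does not change the fundamental group, so $\pi_1(X_{\mathrm{sm}})\cong\pi_1(X_1)$, where $X_1:=T\cup\bigcup_{\rho\in\Sigma(1)}O(\rho)=\bigcup_{\rho\in\Sigma(1)}U_\rho$ is the union of the torus and the codimension-$1$ orbits. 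Each chart $U_\rho=\Spec\C[\rho^\vee\cap M]$ is, after choosing a $\Z$-basis of $N$ with $\v_\rho$ as first vector, isomorphic to $\C\times(\C^*)^{n-1}$, and the pairwise intersections satisfy $U_\rho\cap U_{\rho'}=T$ for $\rho\neq\rho'$.

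Finally I would compute $\pi_1(X_1)$ by Van Kampen applied to the cover $\{U_\rho\}_{\rho\in\Sigma(1)}$, whose pieces and whose pairwise intersections all contain the connected set $T$. Identifying $\pi_1(T)\cong N$ (a one-parameter subgroup $\C^*\to T$ gives an element of $N$, the generator of $\pi_1(\C^*)$ the corresponding loop), the inclusion $T\hookrightarrow U_\rho\cong\C\times(\C^*)^{n-1}$ induces the surjection $N\twoheadrightarrow\Z^{n-1}$ whose kernel is exactly $\Z\,\v_\rho$. Since each $\pi_1(U_\rho)$ is thus a quotient of the abelian group $N=\pi_1(T)$, the Van Kampen pushout collapses to the quotient of $N$ by the subgroup generated by all the kernels, that is
\[
\pi_1(X_1)\cong N\Big/\sum_{\rho\in\Sigma(1)}\Z\,\v_\rho=N\big/\mathcal{L}\big(\v_\rho\mid\rho\in\Sigma(1)\big)=N/N_{\Sigma(1)}.
\]
Chaining the three isomorphisms yields $\pi_1^1(X)\cong N/N_{\Sigma(1)}$. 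I expect the main obstacle to be the careful bookkeeping in the Van Kampen step: one must check that the intersections are connected (they all equal the single connected piece $T$), control basepoints, and justify that, because $N$ is abelian, the normal subgroup generated by the kernels is simply their sum. A secondary point needing care is verifying that the discarded locus is genuinely closed of codimension $\geq 2$ in $X_{\mathrm{sm}}$, so that Theorem~\ref{thm:pi11=pi1} applies.
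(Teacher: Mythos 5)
Your proof is correct and takes essentially the same route as the paper, which (citing Buczy\'nska) only remarks that the result follows from Van Kampen applied to the open covering $\{X_\rho\}_{\rho\in\Si(1)}$ with $X_\rho\cong\C\times(\C^*)^{n-1}$: your reduction to the smooth locus, the stripping of the codimension $\geq 2$ orbits, and the Van Kampen computation over the torus $T$ are exactly the details that sketch leaves implicit.
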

\noindent This result is an application of Van Kampen Theorem to the open covering $\{X_{\rho}\}_{\rho\in\Si(1)}$ of $X$, where $X_{\rho}=X(\rho)\cong\C\times(\C^*)^{n-1}$ is the toric variety associated with the fan given by the single ray $\rho\in\Si$.

\section{Universal 1--coverings}\label{sez:1covering}

Jointly with \cite[Thm. 2.4, Prop.~2.6]{RT-LA&GD}, the previous Theorem \ref{thm:Buczynska} gives the following e\-qui\-va\-lence between a 1--connected $\Q$--factorial complete variety and a PWS:

\begin{theorem}\label{thm:equivalenze} Let $X=X(\Si)$ be a $\Q$--factorial complete $n$--dimensional toric variety of rank $r$. Then the following are equivalent:
\begin{enumerate}
  \item $X$ is 1-connected in codimension 1,
  \item $\pi^1_1(X)\cong N/N_{\Si(1)}\cong \Tors(\Cl(X)) =0 $,
  \item the reduced fan matrix $V$ of $X$ has coprime $n\times n$ minors,
  \item the $\HNF$ of the transposed matrix $V^T$ is given by $\left(
                                                                 \begin{array}{c}
                                                                   I_n \\
                                                                   \mathbf{0}_{r,n} \\
                                                                 \end{array}
                                                               \right)
  $,
  \item $X$ is a PWS.
\end{enumerate}
\end{theorem}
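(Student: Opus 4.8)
The plan is to prove the five-fold equivalence by establishing a cycle of implications, relying heavily on the structural results recalled earlier and on the $\Z$-linear Gale duality developed in the companion paper. The central bridge is Buczyńska's Theorem \ref{thm:Buczynska}, which identifies $\pi_1^1(X)$ with $N/N_{\Si(1)}$; the remaining work is essentially linear algebra over $\Z$ translating this quotient into statements about the fan matrix $V$ and its Smith/Hermite normal forms.

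First I would prove the equivalence $(1)\Leftrightarrow(2)$, which is almost immediate. By Definition \ref{def:pi11}, $X$ is $1$-connected in codimension $1$ precisely when $\pi_1^1(X)=0$, and Theorem \ref{thm:Buczynska} gives the canonical isomorphism $\pi_1^1(X)\cong N/N_{\Si(1)}$. The identification $N/N_{\Si(1)}\cong\Tors(\Cl(X))$ should follow from the standard toric exact sequence relating $N$, the free group of torus-invariant Weil divisors $\Weil(X)$, and the class group $\Cl(X)$; here the columns of the reduced fan matrix $V$ are exactly the generators $\v_\rho$, so $N_{\Si(1)}=\mathcal{L}_c(V)$ and the cokernel $N/\mathcal{L}_c(V)$ is precisely the torsion of $\Cl(X)$. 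Thus $(2)$ restates $(1)$ together with this cokernel computation.

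Next I would connect $(2)$, $(3)$, and $(4)$ through normal-form theory. The cokernel $N/\mathcal{L}_c(V)=\Z^n/\mathcal{L}_c(V)$ is governed by $\SNF(V)$: it is trivial iff all the invariant factors are $1$, equivalently iff the $g.c.d.$ of the maximal (i.e.\ $n\times n$) minors of $V$ equals $1$, which is exactly condition $(3)$. The passage $(3)\Leftrightarrow(4)$ is the observation that the $g.c.d.$ of the maximal minors of $V$ equals the $g.c.d.$ of the maximal minors of $V^T$, and that this $g.c.d.$ is $1$ precisely when $\HNF(V^T)$ has the displayed shape $\left(\begin{smallmatrix} I_n \\ \mathbf{0}_{r,n}\end{smallmatrix}\right)$, since the Hermite form's diagonal block is $I_n$ exactly when $\mathcal{L}_c(V)$ is cotorsion-free in $\Z^n$. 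This is the point where I would invoke the $CF$-matrix terminology of Definition \ref{def:Fmatrice}(e): conditions $(3)$ and $(4)$ both assert that the reduced fan matrix $V$ is a $CF$-matrix.

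Finally, $(3)\text{–}(4)\Leftrightarrow(5)$ is essentially a matter of unwinding definitions: by Definition \ref{def:PWS} a PWS is precisely a $\Q$-factorial complete toric variety whose reduced fan matrix $V$ is a $CF$-matrix, and we have just characterized the $CF$ condition by the coprimality of the maximal minors, i.e.\ by $(3)$. The implication $(5)\Rightarrow(2)$ can alternatively be read off directly from \cite[Thm.~2.4, Prop.~2.6]{RT-LA&GD}. I expect the main obstacle to be the clean identification $N/N_{\Si(1)}\cong\Tors(\Cl(X))$: one must verify that passing to the \emph{reduced} fan matrix (coprime columns) does not alter the relevant cokernel, and that the divisor sequence behaves correctly in the $\Q$-factorial, non-cotorsion-free setting. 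Everything else reduces to standard $\SNF$/$\HNF$ facts that can be cited from \cite{Cohen} and from \S~3 of \cite{RT-LA&GD}.
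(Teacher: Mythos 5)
Your proposal is correct and takes essentially the same route as the paper: the paper's entire proof consists of combining Buczy\'nska's Theorem \ref{thm:Buczynska} with the cited results \cite[Thm.~2.4, Prop.~2.6]{RT-LA&GD}, which contain precisely the identification $N/N_{\Si(1)}=\Z^n/\mathcal{L}_c(V)\cong\Tors(\Cl(X))$ and the $\SNF$/$\HNF$ characterizations (invariant factors, gcd of maximal $n\times n$ minors, the shape of $\HNF(V^T)$, the $CF$--matrix condition of Definition \ref{def:Fmatrice}(e)) that you reconstruct explicitly. Your flagged ``obstacle'' is also handled exactly as you suspect: both Buczy\'nska's theorem and the divisor sequence use the primitive ray generators, i.e.\ the columns of the \emph{reduced} fan matrix, so the two cokernels agree by construction.
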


The previous results allow us to sketch a nice geometric picture associated with any $\Q$--factorial, complete toric variety: the following Theorem \ref{thm:covering&quotient} is a generalization on the rank $r$ of a well known result holding for $r=\rk(\Pic (X))=1$ \cite[Lemma~2.11]{BC}, \cite[Prop.~4.7]{Conrads}. In fact a 1-connected in codimension 1, $\Q$--factorial, complete $n$--dimensional toric variety of rank 1 is necessarily a \emph{weighted projective space} (WPS) whose weights are given by the $1\times (n+1)$ weight matrix $Q$.

\begin{theorem}\label{thm:covering&quotient} A $\Q$--factorial, complete toric variety $X$ admits a canonical universal 1-covering, $Y$ which is a PWS and such that the 1-covering morphism $\varphi:Y\rightarrow X$ is equivariant with respect to the torus actions. In particular every $\Q$--factorial, complete toric variety $X$ can be canonically described as a finite geometric quotient $X\cong Y/\pi^1_1(X)$ of a PWS $Y$ by the torus--equivariant action of $\pi^1_1(X)$.
\end{theorem}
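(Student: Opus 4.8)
The plan is to build the covering PWS directly from the combinatorial data of $X$ and then identify the deck group with $\pi_1^1(X)$. Let $V$ be a reduced fan matrix of $X=X(\Si)$, so $V\in\mathbf{M}(n,n+r;\Z)$ is a reduced $F$--matrix with $\Si\in\SF(V)$. By Theorem \ref{thm:equivalenze}, $X$ fails to be a PWS precisely when $V$ is not a $CF$--matrix, i.e.\ when the column sublattice $\mathcal{L}_c(V)\subseteq\Z^n$ has nontrivial cotorsion $\Z^n/\mathcal{L}_c(V)\cong\Tors(\Cl(X))$. First I would pass to the $CF$--completion: set $\widetilde{N}:=\mathcal{L}_c(V)=N_{\Si(1)}$, regarded as a finite-index sublattice of $N$, and view the \emph{same} column vectors $\v_1,\ldots,\v_{n+r}$ now as primitive generators inside $\widetilde{N}$. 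Choosing a basis of $\widetilde{N}$ yields a new integer matrix $\widetilde V$ which is a reduced $CF$--matrix, and the fan $\Si$, read in the lattice $\widetilde N_\R=N_\R$, defines a toric variety $Y:=X(\widetilde{N},\Si)$. Since $\widetilde V$ is a reduced $CF$--matrix and $\Si\in\SF(\widetilde V)$, Definition \ref{def:PWS} shows $Y$ is a PWS. This is the canonical candidate for the universal $1$--covering.

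Next I would produce the morphism and the quotient. The inclusion of lattices $\widetilde{N}\hookrightarrow N$ is a map of fans $(\widetilde N,\Si)\to (N,\Si)$ (the cones are literally the same subsets of the common real space $N_\R$), hence by the functoriality of the toric construction it induces a toric morphism $\varphi:Y\to X$ which is the identity on the torus factor up to the isogeny $\widetilde T\to T$ determined by $\widetilde N\subseteq N$. By standard toric theory a lattice inclusion of finite index $d=[N:\widetilde N]$ with identical fans yields a finite surjective morphism whose fibres over the torus are the kernel $\mu:=\Hom(N/\widetilde N,\C^*)\cong N/\widetilde N$, and the branch locus sits in codimension at least $2$ because $\varphi$ restricts to an isomorphism on each chart $X_\sigma$ away from where distinct cones meet. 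Concretely the ramification is supported on orbit closures of codimension $\geq 2$, so $\varphi$ is a $1$--covering in the sense of Definition \ref{def:1-covering}, and it is torus--equivariant by construction. The identification $X\cong Y/\mu$ as a geometric quotient then follows from the description of $\varphi$ as the quotient by the finite abelian group $\mu$ acting through the torus.

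It remains to check universality and to pin down the deck group. By the Proposition following Definition \ref{def:1-covering}, a $1$--covering is universal exactly when its source is $1$--connected in codimension $1$; since $Y$ is a PWS, Theorem \ref{thm:equivalenze} gives $\pi_1^1(Y)=0$, so $\varphi$ is automatically a \emph{universal} $1$--covering once it is shown to be a $1$--covering. Finally I would identify the group: Theorem \ref{thm:Buczynska} computes $\pi_1^1(X)\cong N/N_{\Si(1)}=N/\widetilde N$, which is exactly the group $\mu$ effecting the quotient, so $X\cong Y/\pi_1^1(X)$ with the asserted equivariant action.

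The main obstacle, I expect, is verifying that $\varphi$ is genuinely \emph{unramified in codimension $1$} rather than merely finite and generically \'etale. One must confirm that the column vectors $\v_i$, which are the primitive ray generators of $\Si$ in the lattice $N$, remain primitive in the sublattice $\widetilde N=\mathcal{L}_c(V)$ so that no new ramification is introduced along the toric divisors $D_{\rho}$; this is precisely what the reducedness of $V$ buys, since each $\v_i$ lies in $\widetilde N$ with coprime entries with respect to a basis of $\widetilde N$, forcing the ray map $\rho\cap\widetilde N\to\rho\cap N$ to be an equality of monoids. Establishing this primitivity statement carefully — equivalently, that the induced map on each one-dimensional orbit is unramified — is the delicate technical point on which the whole $1$--covering assertion rests.
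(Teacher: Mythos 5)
Your construction is essentially the paper's own proof: both pass to the sublattice $\widehat N = N_{\Si(1)} = \mathcal{L}_c(V)$, read the \emph{same} fan in that sublattice to build $Y$, and then invoke Theorem \ref{thm:equivalenze} together with Buczy\'nska's results (Theorem \ref{thm:Buczynska} and the Proposition following Definition \ref{def:1-covering}) to conclude that $Y$ is a PWS, that $\varphi$ is the universal $1$--covering, and that the deck group is $\pi_1^1(X)\cong N/N_{\Si(1)}$. The ``delicate point'' you flag at the end is in fact automatic: a column $\v_i$ that is primitive in $N$ remains primitive in any sublattice $\widetilde N\subseteq N$ containing it, since a factorization $\v_i=k\w$ with $\w\in\widetilde N$ and $k\geq 2$ would already contradict primitivity in $N$, so the monoid generators of $\rho\cap\widetilde N$ and $\rho\cap N$ coincide without further argument.
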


\begin{proof}
Given a complete fan $\Sigma\subseteq \mathfrak{P}(N_{\R})$, let $X(\Sigma)$ be the associated toric variety. Let $\widehat{N}$ be the sublattice $N_{\Sigma(1)}\subseteq N$ defined in (\ref{NSigma}); it has finite index since $\Sigma$ is complete.  Let $\widehat{\Sigma}$ be the fan defined by $\Sigma\subseteq \mathfrak{P}(\widehat{N}_{\R})$. Consider the toric variety $Y(\widehat{\Sigma})$.
The inclusion $i:\widehat{N}\hookrightarrow N$ induces a surjection $i_*: Y\rightarrow X$ which turns out to be the canonical projection on the quotient of $Y$ by the action of $N/\widehat{N}$. Furthermore, part 2 of Theorem \ref{thm:equivalenze} shows  that $\Cl(Y)$ is torsion-free, so that the results of
Section \ref{ssez:Buczynska} imply that the canonical morphism $i_*$ is the universal $1$-covering. Finally,
if $X$ is simplicial, so is $Y$, and then $Y$ is a PWS by Theorem \ref{thm:equivalenze}. This completes the proof.
\end{proof}

\begin{remark}{\rm Let us notice that the given proof of Theorem \ref{thm:covering&quotient} shows that a $\Q$--factorial complete toric variety $X$ uniquely determines its universal 1-covering $Y$ which is the PWS defined by the following data:
\begin{itemize}
  \item a weight matrix $Q=\G(V)$, where $V$ is a reduced fan matrix of $X$: in fact a fan matrix of $Y$ is given by $\G(Q)=\widehat{V}$,
  \item the choice of a fan $\widehat{\Si}\in \mathcal{SF}(\widehat{V})$ uniquely determined by the given choice of the fan $\Si\in \mathcal{SF}(V)$ defining $X$.
\end{itemize}}
\end{remark}

\begin{remark}\label{rem:universale}{\rm Given a fan matrix $V$, consider $\widehat{V}=\G(\G(V))$, which is a $CF$--matrix by \cite[Prop.~3.11]{RT-LA&GD}. Let us recall that:
      \begin{equation}\label{universalità}
        \exists\,\b\in\GL_n(\Q)\cap\mathbf{M}_n(\Z)\ :\ V=\b\cdot\widehat{V}
      \end{equation}
      \cite[Prop.~3.1(3)]{RT-LA&GD}. In a sense, the universality property of the 1-covering $Y(\widehat{\Si})$ is summarized by the linear algebraic property (\ref{universalità}). In fact, the proof of Theorem \ref{thm:covering&quotient} given above shows that any further 1-covering of the $\Q$--factorial complete toric variety $X(\Si)$ admitting $V$ as a fan matrix is given by the choice of an integer matrix $\g\in\GL_n(\Q)\cap\mathbf{M}_n(\Z)$ \emph{dividing} the matrix $\b$ defined in (\ref{universalità}), i.e. such that $\b\cdot\g^{-1}\in\GL_n(\Q)\cap\mathbf{M}_n(\Z)$.
Namely:
\begin{itemize}
  \item under our hypothesis, the fan $\Si\in\SF(V)$ is determined as the fan of all the faces of every cone in $\Si(n)$,
  \item $\Si(n)$ is assigned by the fan matrix $V$ and the following subset of the power set $\mathfrak{P}(\{1,\ldots,n+r\})$:\quad\quad\quad$\mathcal{I}_{\Si}:=\{I\subseteq\{1,\ldots,n+r\}\,|\,\langle V_I\rangle\in\Si(n)\}$\,,
  \item consider the set of \emph{divisors of $\b$}:  $$\mathfrak{D}(\b):=\{\eta\in\GL_n(\Q)\cap\mathbf{M}_n(\Z)\,|\,\b\cdot\eta^{-1}\in\GL_n(\Q)
      \cap\mathbf{M}_n(\Z)\}\,,$$
then, for every $\eta\in\mathfrak{D}(\b)$, $\mathcal{I}_{\Si}$ determines a fan $\Si_{\eta}\in\SF(V_{\eta})$, with $V_{\eta}:=\eta\cdot \widehat{V}$, giving a $\Q$--factorial complete toric variety $X_{\eta}(\Si_{\eta})$.
\end{itemize}
In particular $Y(\widehat{\Si})=X_{I_n}(\Si_{I_n})$ and $X(\Si)=X_{\b}(\Si_{\b})$.}
\end{remark}
Then we get the following
\begin{corollary}\label{cor:universale}(Universal 1--covering Theorem) In the notation introduced in Remark \ref{rem:universale}, the following facts hold:
\begin{enumerate}
  \item $Y=X_{I_n}$ is the universal 1--covering PWS of $X$ given by Theorem \ref{thm:covering&quotient}, meaning that $X$ is a finite abelian quotient of $Y$ by the action of $\pi^1_1(X)$,
  \item $X_{\eta}\to X$ is a 1--covering and there is a natural factorization of 1-coverings
\begin{equation*}
    \xymatrix{Y\ar[dd]_-{\varphi}\ar[dr]^-f&\\
    &X_{\eta}\ar[dl]^-{\phi}\\
    X&}
\end{equation*}
as in Definition \ref{def:1-covering}; in particular $X$ is a finite abelian quotient of $X_{\eta}$ by the action of $\pi^1_1(X)/\pi^1_1(X_{\eta})$.
\end{enumerate}
Consequently, writing $\eta\sim\eta'$ if they are related by means of left multiplication of an element in $\GL_n(\Z)$, the quotient set $\mathfrak{D}(\b)/_{\sim}$ parameterizes all the topologically distinct 1--coverings of $X$.
\end{corollary}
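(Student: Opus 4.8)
The plan is to translate the whole statement into the language of intermediate lattices sitting between $\widehat{N}=N_{\Sigma(1)}$ and $N$, and to read off the three claims from the lattice inclusions recorded by the matrices of Remark \ref{rem:universale}. Part (1) is already essentially contained in Theorem \ref{thm:covering&quotient}: the morphism $\varphi:Y\to X$ induced by $\widehat{N}\hookrightarrow N$ is the universal $1$--covering, $Y$ is a PWS, and $X\cong Y/\pi^1_1(X)$ with $\pi^1_1(X)\cong N/\widehat{N}$ acting torus--equivariantly. The only thing to add is the identification $Y=X_{I_n}$, which is immediate since $\widehat{V}=\G(\G(V))$ is the fan matrix of $Y$ and $V_{I_n}=I_n\cdot\widehat{V}=\widehat{V}$.

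First I would set up the dictionary between divisors of $\beta$ and intermediate lattices. Fixing a basis of $N=\Z^n$ so that the inclusion $\widehat{N}\hookrightarrow N$ is represented by $\beta$ (this is exactly the content of $V=\beta\cdot\widehat{V}$, the columns of $V$ and $\widehat{V}$ being the common ray generators read in the two bases), I claim that each $\eta\in\mathfrak{D}(\beta)$ corresponds to the intermediate lattice $N_\eta$ whose inclusion $N_\eta\hookrightarrow N$ is represented by $\beta\eta^{-1}\in\mathbf{M}_n(\Z)$ and whose over--lattice inclusion $\widehat{N}\hookrightarrow N_\eta$ is represented by $\eta$. Indeed $\widehat{N}\subseteq N_\eta\subseteq N$ because both $\eta$ and $\beta\eta^{-1}$ are integral, and the columns of $V_\eta=\eta\cdot\widehat{V}$ are precisely the common ray generators read in the basis of $N_\eta$; thus $X_\eta=X(\Sigma_\eta)$ is the toric variety attached to the fan $\Sigma$ and the lattice $N_\eta$, the datum $\mathcal{I}_\Sigma$ guaranteeing that $V_\eta$ is its fan matrix. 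A lattice inclusion compatible with a common fan induces a toric morphism, and this produces the factorization $Y\to X_\eta\to X$ of Corollary \ref{cor:universale}(2): the first arrow $f$ is induced by $\eta$, the second $\phi$ by $\beta\eta^{-1}$, and their composite by $(\beta\eta^{-1})\eta=\beta$, which recovers $\varphi$.

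Next I would show that each of these morphisms is a $1$--covering and compute the relevant quotients. The decisive point is the ramification criterion: a finite toric morphism $X(\Sigma,N')\to X(\Sigma,N'')$ attached to a finite--index inclusion $N'\subseteq N''$ of lattices sharing the fan $\Sigma$ is unramified over the codimension--$1$ orbit of a ray $\rho$ if and only if the primitive generator $\v_\rho$ of $\rho$ in $N''$ already lies in $N'$ (the local model along that orbit being the normal $z\mapsto z^{m_\rho}$ times an \'etale torus map, as in the open covering $\{X_\rho\}$ underlying Theorem \ref{thm:Buczynska}); hence the morphism is a $1$--covering precisely when $N_{\Sigma(1)}\subseteq N'$. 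For $\varphi$, $f$ and $\phi$ the relevant sublattices are $\widehat{N}$, $\widehat{N}$ and $N_\eta$, all containing $\widehat{N}=N_{\Sigma(1)}$, so all three are $1$--coverings. Theorem \ref{thm:Buczynska} then gives $\pi^1_1(X_\eta)\cong N_\eta/\widehat{N}$, and the exact sequence $0\to N_\eta/\widehat{N}\to N/\widehat{N}\to N/N_\eta\to 0$ identifies the deck group of $\phi$ with $(N/\widehat{N})/(N_\eta/\widehat{N})\cong\pi^1_1(X)/\pi^1_1(X_\eta)$, yielding the presentation $X\cong X_\eta/(\pi^1_1(X)/\pi^1_1(X_\eta))$ asserted in (2).

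Finally I would establish the parameterization. Two matrices $\eta,\eta'\in\mathfrak{D}(\beta)$ determine the same sublattice $N_\eta=N_{\eta'}$ of $N$ exactly when $\beta\eta^{-1}$ and $\beta\eta'^{-1}$ differ by right multiplication by an element of $\GL_n(\Z)$, equivalently when $\eta'=g\eta$ with $g\in\GL_n(\Z)$, i.e. $\eta\sim\eta'$; hence $\mathfrak{D}(\beta)/_{\sim}$ is in bijection with the intermediate lattices $\widehat{N}\subseteq N'\subseteq N$, each giving a $1$--covering $X(\Sigma,N')\to X$. Conversely every $1$--covering of $X$ arises in this way: by universality (Theorem \ref{thm:covering&quotient} together with the results of Section \ref{ssez:Buczynska}) it is dominated by $Y$, so in the equivariant toric setting it corresponds to a lattice $N'$ with $\widehat{N}\subseteq N'\subseteq N$, that is, to a class in $\mathfrak{D}(\beta)/_{\sim}$; distinct classes give distinct deck groups $N/N'$ and distinct ramification, hence topologically distinct coverings. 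The main obstacle I anticipate is precisely the ramification criterion for $X_\eta\to X$ — justifying that the only possible ramification sits over the codimension--$1$ orbits and is controlled by the primitivity of the $\v_\rho$ — together with checking that $\mathcal{I}_\Sigma$ really makes $V_\eta=\eta\widehat{V}$ the fan matrix of $X(\Sigma,N_\eta)$ in the basis of $N_\eta$, so that the three morphisms are genuinely toric and compatible.
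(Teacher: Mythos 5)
Your proposal is correct and takes essentially the same route as the paper: the paper derives the corollary from Remark \ref{rem:universale} together with the proof of Theorem \ref{thm:covering&quotient}, i.e.\ precisely the dictionary between divisors $\eta\in\mathfrak{D}(\b)$ and intermediate lattices $\widehat{N}\subseteq N_\eta\subseteq N$ (via $V_\eta=\eta\cdot\widehat{V}$ and the shared index set $\mathcal{I}_\Si$), with deck groups identified through Buczy\'nska's computation $\pi_1^1\cong N/N_{\Si(1)}$. Your write-up simply makes explicit some points the paper leaves implicit (the codimension--$1$ ramification criterion and the bijection $\mathfrak{D}(\b)/_\sim\leftrightarrow$ intermediate lattices), which is consistent with, not divergent from, the paper's argument.
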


\section{Weil versus Cartier}\label{sez:W&C}

The aim of the present section is that of extending \cite[Thm.~2.9]{RT-LA&GD} to every $\Q$--factorial complete toric variety. By Theorem \ref{thm:covering&quotient} this means extending this result, currently holding for PWS, to their quotients.

Let us start by the following preliminary

\begin{proposition}\label{prop:CartierYX} Let $X(\Si)$ be a $Q$--factorial complete toric variety and $Y(\widehat{\Si})$ be its universal 1-covering. Let $\{D_{\rho}\}_{\rho\in\Si(1)}$ and $\{\widehat{D}_{\rho}\}_{\rho\in\widehat{\Si}(1)}$ be the standard bases of $\Weil(X)$ and $\Weil(Y)$, respectively, given by the torus orbit closures of the rays. Then
\begin{equation*}
     D=\sum_{\rho\in\Si(1)} a_{\rho} D_{\rho}\in\Cart(X)\quad \Longrightarrow\quad \widehat{D}=\sum_{\rho\in\widehat{\Si}(1)} a_{\rho} \widehat{D}_{\rho}\in\Cart(Y)\,.
\end{equation*}
Therefore, under the identification $\Z^{|\Si(1)|}\cong\Weil(X)\stackrel{\a}{\cong}\Weil(Y)\cong\Z^{|\widehat{\Si}(1)|}$ realized by the isomorphism $D_{\rho}\stackrel{\a}{\mapsto}\widehat{D}_{\rho}$,
$$
\Cart(X)\cong\a(\Cart(X))\leq\Cart(Y)\leq\Weil(Y)
$$
is a chain of subgroup inclusions. Moreover the induced morphism $\overline{\a}:\Cl(X)\to\Cl(Y)$ is injective when restricted to $Pic(X)$, realizing the following further chain of subgroup inclusions
$$\Pic(X)\cong\overline{\a}(\Pic(X))\leq\Pic(Y)\leq\Cl(Y)$$.
\end{proposition}

\begin{proof} Let us fix a basis $\mathcal{B}$ of the $\Z$-module $M\cong\Z^n$ and let $V$ and $\widehat{V}$ be fan matrices representing the standard morphisms
$$
div_X:M\cong\Z^n \stackrel{V^T}\longrightarrow\Z^{|\Si(1)|}\cong\Weil(X)\quad,\quad div_Y:M\cong\Z^r \stackrel{\widehat{V}^T}\longrightarrow\Z^{|\widehat{\Si}(1)|}\cong\Weil(Y)
$$
Recalling (\ref{universalità}) in Remark \ref{rem:universale}, let $\b\in\GL_n(\Q)\cap\mathbf{M}_n(\Z)$ be  such that $V=\b\widehat{V}$ and so realizing an injective endomorphism of the $\Z$-module $M$.
The result follows by writing down the condition of being locally principal for a Weil divisor and observing that
\begin{eqnarray}\label{ISigma}
    \mathcal{I}^{\Si}&=&\{I\subseteq\{1,\ldots,n+r\}:\left\langle V^I\right\rangle\in\Si(n)\}\\
    \nonumber
    &=& \{I\subseteq\{1,\ldots,n+r\}:\left\langle \widehat{V}^I\right\rangle\in\widehat{\Si}(n)\}=\mathcal{I}^{\widehat{\Si}}
\end{eqnarray}
by the construction of $\widehat{\Si}\in\SF(\widehat{V})$, given the choice of $\Si\in\SF(V)$. Notice that $\mathcal{I}^\Si$ describes the complements of those sets described by $\mathcal{I}_\Si$, as defined in Remark~\ref{rem:universale}. In particular the Weil divisor $\sum_{j=1}^{n+r}a_jD_j\in\Weil(X)$ is Cartier if and only if
\begin{equation}\label{cartier}
    \forall\,I\in\mathcal{I}^{\Si}\quad\exists\,\mathbf{m}_I\in M : \forall\,j\not\in I\ \v_j^T\mathbf{m}_I=a_j\,,
\end{equation}
where $\v_j$ is the $j$-th column of $V$.
Then $\a(\sum_{j=1}^{n+r}a_jD_j)=\sum_{j=1}^{n+r}a_j\widehat{D}_j$ is a Cartier divisor since
\begin{equation*}
  \forall\,I\in\mathcal{I}^{\Si}\quad \forall\,j\not\in I \quad \widehat{\v}_j^T(\b^T\mathbf{m}_I)=a_j
\end{equation*}
where $\widehat{\v}_j$ is the $j$-th column of $\widehat{V}$. \\
The injectivity of $\overline{\alpha}$ follows from the well-known freeness of $\Pic(X)$.
\end{proof}

We are then in a position of stating and proving the main result of this section.

\begin{theorem}\label{thm:generazione} Let $X=X(\Si)$ be a $n$--dimensional $\Q$--factorial complete toric variety of rank $r$ and $Y=Y(\widehat{\Si})$ be its universal 1--covering. Let $V$ be a reduced fan matrix of $X$, $Q=\G(V)$ a weight matrix of $X$ and $\widehat{V}=\G(Q)$ be a $CF$--matrix giving a fan matrix of $Y$.
\begin{enumerate}
\item Consider the matrix
$$U_Q=(u_{ij})\in\GL_{n+r}(\Z)\ :\ U_Q\cdot Q^T=\HNF\left(Q^T\right)$$
 then the rows of $^rU_Q$ (recall notation in \ref{ssez:lista}) describe the following set of generators of a  \lq\lq free part\rq\rq \  $F\,(\,\cong\Cl(Y))$ of $\Cl(X)$
\begin{equation}\label{generazione}
    \forall\,1\leq i\leq r\quad L_i:=\sum_{j=1}^{n+r} u_{ij} D_j\in \mathcal{W}_T(X)\quad\text{and}\quad F=\bigoplus_{i=1}^r \Z[d_X(L_i)]
\end{equation}
where $d_X:\mathcal{W}_T(X)\to \Cl(X)$ is the morphism giving to a Weil divisor its linear equivalence class.
\item Define $\mathcal{I}^\Si$ as in (\ref{ISigma}). For any $I\in\mathcal{I}^\Si$ let $E_I$ be the $r\times (n+r)$ matrix admitting as rows the standard basis vectors $e_i=(0,\ldots,0,\underset{i}{1},0,\ldots,0)$, for $i\in I$, representing the $i$-th basis divisor $D_i\in\Weil(X)\cong\Z^{|\Si(1)|}$. Set $\widetilde{V}_I:=\left(V^T\,|\,E_I^T\right)\in\mathbf{M}_{n+r}(\Z)$. Then Cartier divisors give rise to the following maximal rank subgroup of $\Weil(X)$
    \begin{equation*}
      \Cart(X)\cong \bigcap_{I\in\mathcal{I}^\Si} \mathcal{L}_c\left(\widetilde{V}_I\right)\leq \Z^{|\Si(1)|}\cong\Weil(X)
    \end{equation*}
  and a basis of $\Cart(X)\leq\Weil(X)$ can be explicitly computed by applying the procedure described in \cite[\S~1.2.3]{RT-LA&GD}.

  \item There exists a choice of the fan matrices $V$ and $\widehat{V}=\G(\G(V))$ and a diagonal matrix $\Delta=\diag(c_1,\ldots,c_n)\in\GL_n(\Q)\cap\mathbf{M}_n(\Z)$ such that
      \begin{itemize}
            \item[(a)] $1=c_1\,|\,\ldots\,|\,c_n$\,,
            \item[(b)] $V=\Delta\cdot\widehat{V}$\,,
            \item[(c)] $\Tors(\Cl(X))\cong\bigoplus_{i=1}^n\Z/c_i\Z=\bigoplus_{k=1}^s\Z/\tau_k\Z$\,,\\ according with the decomposition of $\Cl(X)$ given by the fundamental theorem of finitely generated abelian groups,
         \begin{equation}\label{Chow-decomposizione-tors}
          \Cl(X)=F\oplus\Tors(\Cl(X))\cong\Z^r\oplus\bigoplus_{k=1}^s\Z/\tau_k\Z
         \end{equation}
                where $s< n\,,\,\tau_k=c_{n-s+k}>1\,,\,c_1=\cdots=c_{n-s}=1\,.$
          \end{itemize}

          \item Given the choice of $\widehat{V}=(\widehat{v}_{ij})$ and $V$ as in the previous part (3), then the rows of the submatrix $_s{\widehat{V}}$ describe the following set of generators of $\Tors(\Cl(X))$
            \begin{eqnarray}\label{generazione-tors}
              \forall\,1\leq k\leq s\quad T_k&:=&\sum_{j=1}^{n+r} \widehat{v}_{n-s+k,j} D_j\in \mathcal{W}_T(X)\quad\text{and}\\
              \nonumber
              \Tors(\Cl(X))&=&\mathcal{L}\left(d_X(T_1),\ldots,d_X(T_s)\right)
          \end{eqnarray}
          \item Given the choice of $\widehat{V}$ and $V$ as in the previous parts (3) and (4), consider
              \begin{eqnarray*}
              % \nonumber to remove numbering (before each equation)
                U&:=&\left(
                     \begin{array}{c}
                       ^rU_Q \\
                       \widehat{V} \\
                     \end{array}
                   \right)\in\GL_{n+r}(\Z)\\
                W &\in&\GL_{n+r}(\Z) \ :\  W\cdot ({^{n+r-s}U})^T=\HNF\left(({^{n+r-s}U})^T\right) \\
                G &:=& {_s\widehat{V}}\cdot\ ({_{s}W})^T \in \mathbf{M}_s(\Z)\\
                U_G&\in&\GL_{s}(\Z) \ :\  U_G\cdot G^T =\HNF(G^T)\,.
              \end{eqnarray*}
              Then a ``torsion matrix''  representing the ``torsion part'' of the morphism $d_X$, that is, $\tau_X:\Weil(X)\to\Tors(\Cl(X))$,
              is given by
              \begin{equation}\label{Gamma}
                  \Ga = {U_G}\cdot\ {_{s}W} \mod {\boldsymbol\tau}
              \end{equation}
              where this notation means that the $(k,j)$--entry of $\Ga$ is given by the class in $\Z/\tau_k\Z$ represented by the corresponding $(k,j)$--entry of ${^sU_G}\cdot\ {_{s}W}$, for every $1\leq k\leq s\,,\,1\leq j\leq n+r$.
\item Let $C_X\in\GL_{n+r}(\Q)\cap\mathbf{M}_{n+r}(\Z)$ be a matrix whose rows give a basis of $\Cart(X)$ in $\Weil(X)$, as obtained in the previous part 2. Identify $\Cl(X)$ with $\Z^r\oplus\bigoplus_{k=1}^s\Z/\tau_k\Z$ by item (c) of part 3, and represent the morphism $d_X$ by $Q\oplus \Gamma$, according to (5). Let $A\in\GL_{n+r}(\Z)$ be a matrix such that $A\cdot C_X \cdot Q^T$ is in $\HNF$. Let $\mathbf{c}_1,\ldots,\mathbf{c}_r$ be the first $r$ rows of the matrix $A\cdot C_X$ and for $i=1,\ldots r$ put $\mathbf{b}_i=Q\cdot\mathbf{c}_i^T + \Ga\cdot \mathbf{c}_i^T$. Then $\mathbf{b}_1,\ldots \mathbf{b}_r$ is a basis of the free group $\Pic(X)$ in $\Cl(X)$.

\item Setting $\d_{\Si}:=\lcm\left(\det(Q_I):I\in\mathcal{I}^\Sigma\right)$
then
$$\d_{\Si}\mathcal{W}_T(X)\subseteq \mathcal{C}_T(X)\quad\text{and}\quad\d_{\Si}\mathcal{W}_T(Y)\subseteq \mathcal{C}_T(Y)$$
and there are the following divisibility relations
$$\d_{\Si}\ |\ [\Cl(Y):\Pic(Y)]=[\mathcal{W}_T(Y):\mathcal{C}_T(Y)]\ |\ [\Cl(X):\Pic(X)]= [\mathcal{W}_T(X):\mathcal{C}_T(X)]\,.$$
\end{enumerate}
\end{theorem}

\begin{proof}
(1): Recall that $Q$ is a representative matrix of the ``free part'' of the morphism $d_X:\Weil(X)\to\Cl(X)$, that is, the morphism $f_X:\Weil(X)\to F\cong\Cl(Y)$. Then the proof goes on as proving part (1) of \cite[Thm.~2.9]{RT-LA&GD}: namely
$$ U_Q\cdot Q^T = \HNF\left(Q^T\right)= \left(
                                          \begin{array}{c}
                                            \mathbf{I}_r \\
                                            \mathbf{0}_{n,r} \\
                                          \end{array}
                                        \right)\quad\Longrightarrow\quad Q\cdot\ ^rU_Q = \mathbf{I}_r\,.
$$
(2): Recalling relation (\ref{cartier}) in the proof of Proposition \ref{prop:CartierYX}, set
$$\forall\,I\in\mathcal{I}^{\Si}\quad\mathcal{P}^I=\{L=\sum_{j=1}^{n+r}a_jD_j\in \mathcal{W}_T(X)\ |\ \exists\,\mathbf{m}\in M : \forall\,j\not\in I\ \mathbf{m}\cdot\v_j=a_j\}.$$
Then $\mathcal{P}^I$ contains $\mathrm{Im}(div_X:M\to\Weil(X))=\mathcal{L}_c\left(V^T\right)$ and a $\Z$-basis of $\mathcal{P}^I$ is given by
$$\{D_j, j\in I\}\cup\{\sum_{k=1}^{n+r}v_{ik}D_k, i=1,\ldots ,n\},$$
where $\{v_{ik}\}$ is the $i$-th entry of $\mathbf{v}_k$, so giving the rows of the matrix $\widetilde{V}_I$ defined in the statement.

 \noindent (3): By (\ref{universalità}) there exists a matrix $\b\in\GL_n(\Q)\cap\mathbf{M}_n(\Z)$ such that $\b\cdot\widehat{V}=V$. Define $\Delta:=\SNF(\b)$. It is then a well known fact (see e.g. \cite[Algorithm~2.4.14]{Cohen}) the existence of matrices $\mu,\nu\in\GL_n(\Z)$ such that
 \begin{equation}\label{betaSmith}
 \Delta=\mu\cdot\b\cdot\nu\quad\Longrightarrow\quad \Delta\cdot(\nu^{-1}\cdot\widehat{V})=\mu\cdot V\,.
 \end{equation}
 Notice that $\nu^{-1}\cdot\widehat{V}\sim\widehat{V}$ are equivalent $CF$--matrices and $\mu\cdot V\sim V$ are equivalent $F$--matrices giving a choice of the fan matrices of $Y$ and $X$, respectively, satisfying conditions (a) and (b): notice that $c_1=\gcd(c_1,\ldots,c_n)=1$ since $V$ is a reduced $F$--matrix. Then (c) follows by recalling that
 \begin{equation*}
   \Tors(\Cl(X))\cong \Z^n/ \mathcal{L}_r(T_n)\,,
\end{equation*}
where $T_n$ is the upper $n\times n$ submatrix of $\HNF(V^T)$ (see relations (10) in \cite[Thm.~2.4]{RT-LA&GD}).

 \noindent (4): If $\widehat{V}$ and $V$ are such that condition (b) in (4) holds, then the $k$--th row of $_s\widehat{V}$ is actually the $(n-s+k)$--th row of $V$ divided by the $\gcd$ of its entries, i.e. by $c_{n-s+k}=\tau_k>1$ . Since the rows of $V$ span $\ker(d_X)$, the $k$--th row of $_s\widehat{V}$ gives then rise to the torsion Weil divisor $T_k=\sum_{j=1}^{n+r} \widehat{v}_{n-s+k,j} D_j$. Recalling (4)(c) the classes $d_X(T_1),\ldots,d_X(T_s)$ suffice to generate $\Tors(\Cl(X))$.

 \noindent (5): A representative matrix of the torsion part $\tau_X:\Weil(X)\to\Cl(X)$ of the morphism $d_X$ in diagram (\ref{div-diagram-covering}) is any matrix satisfying the following properties:
  \begin{itemize}
    \item[$(i)$] $\Ga=(\g_{kj})$ with $\g_{kj}\in\Z/\tau_k\Z$,
    \item[$(ii)$] $\Ga\cdot (^rU_Q)^T=\mathbf{0}_{s,r} \mod \boldsymbol\tau$, meaning that $\Ga$ kills the generators of the free part $F\leq\Cl(X)$ defined in (\ref{generazione}),
    \item[$(iii)$] $\Ga\cdot V^T=\mathbf{0}_{s,n} \mod \boldsymbol\tau$, where $V$ is a fan matrix satisfying condition 4.(b): this is due to the fact that the rows of $V$ span $\ker(d_X)$,
    \item[$(iv)$] $\Ga\cdot({_s\widehat{V}})^T=\mathbf{I}_s \mod \boldsymbol\tau$, since the rows of ${_s\widehat{V}}$ give the generators of $\Tors(\Cl(X))$, as in (\ref{generazione-tors}).
  \end{itemize}
  Therefore it suffices to show that the matrix $ {U_G}\cdot\ {_{s}W}$ in (\ref{Gamma}) satisfies the previous conditions $(ii)$, $(iii)$ and $(iv)$ without any reduction mod $\boldsymbol\tau$, that is,
  \begin{equation*}
     {U_G}\cdot\ {_{s}W}\cdot\ ({^{n+r-s}U})^T=\mathbf{0}_{s,n+r-s}\quad,\quad {U_G}\cdot\ {_{s}W}\cdot\ ({_s\widehat{V}})^T=\mathbf{I}_s\,.
  \end{equation*}
 The first equation follows by the definition of $W$, in fact
 \begin{equation*}
     W\cdot ({^{n+r-s}U})^T=\HNF\left(({^{n+r-s}U})^T\right)
     =\left(
                                                                                              \begin{array}{c}
                                                                                                \mathbf{I}_{n+r-s} \\
                                                                                                \mathbf{0}_{s,n+r-s} \\
                                                                                              \end{array}
                                                                                            \right)\,\Rightarrow\,{_{s}W}\cdot\ ({^{n+r-s}U})^T=\mathbf{0}_{s,n+r-s}
 \end{equation*}
 The second equation follows by the definition of $U_G$, in fact
 \begin{equation*}
     U_G\cdot {_{s}W}\cdot\ ({_s\widehat{V}})^T= U_G\cdot G^T =\HNF(G^T)= \mathbf{I}_s\,.
 \end{equation*}

\noindent (6): By definition $$\Pic(X)=\mathrm{Im}(\Cart(X)\hookrightarrow\Weil(X)\stackrel{d_X}{\to}\Cl(X))$$
so that $\Pic(X)$ is generated by the image under $Q\oplus \Gamma$ of the transposed of the rows of $C_X$. Since $\rk(C_X)=n+r$ and $\rk(Q)=r$, the matrix $C_X\cdot Q^T$ has rank $r$ and therefore its $\HNF$ has the last $n-r$ rows equal to zero. Therefore the rows of the matrix $A\cdot C_X$ provide a basis of $\Cart(X)$ in $\Weil(X)$ such that its last $n$ rows are a basis of $\mathcal{L}_r(\widehat V)\cap \Cart(X)=\mathcal{L}_r( V)$. Since $\Pic(X)$ is free of rank $r$ it is freely generated by the images under $d_X$ of the first $r$ rows.

 \noindent (7): Part (4) of \cite[Thm.~2.9]{RT-LA&GD} gives that $\d_{\Si}\ |\  [\Cl(Y):\Pic(Y)]=[\Weil(Y):\Cart(Y)]$. On the other hand Proposition \ref{prop:CartierYX} gives that $[\mathcal{W}_T(Y):\mathcal{C}_T(Y)]\ |\ [\mathcal{W}_T(X):\mathcal{C}_T(X)]=[\Cl(X):\Pic(X)]$.
\end{proof}

\begin{remark}\label{rem:} {\rm This is the generalization of \cite[Rem.~2.10]{RT-LA&GD}. The most part of results stated in Theorem~\ref{thm:generazione} are based on the algorithms giving the $\HNF$ and the $\SNF$ of a matrix, and associated switching matrices, which are well known algorithms (see e.g. \cite[Algorithms 2.4.4 and 2.4.14]{Cohen}) implemented in many computer algebra procedures. Then Theorem \ref{thm:generazione} gives quite effective and constructive me\-thods to produce a lot of interesting information characterizing a given $\Q$--factorial complete toric variety $X$. In particular
\begin{itemize}
  \item[i.] given a fan matrix $V$ of $X$, a switching matrix $U_V\in\GL_{n+r}(\Z)$, such that $\HNF(V^T)=U_V\cdot V^T$, encodes a weight matrix $Q=\G(V)$ of $X$ in the last $r$ rows \cite[Prop.~4.3]{RT-LA&GD};
  \item[ii.] given a weight matrix $Q$ of $X$, a switching matrix $U_Q\in\GL_{n+r}(\Z)$, such that $\HNF(Q^T)=U_Q\cdot Q^T$, encodes both a fan matrix $\widehat{V}$ of the universal 1-covering $Y$ of $X$, given by the lower $n$ rows of $U_Q$ \cite[Prop.~4.3]{RT-LA&GD}, and a basis $\{d_X(L_i)\}_{i=1}^r$ of a free part $F\cong\Cl(Y)\cong\Z^r$ of the divisor class group $\Cl(X)$, given by the upper $r$ rows of $U_Q$, as in part 1 of Theorem \ref{thm:generazione};
  \item[iii.] by (\ref{universalità}) there exists $\b\in\GL_n(\Z)$ such that $\b\cdot\widehat{V}=V$; an effective procedure producing $\b$ is the following:
      \begin{itemize}
        \item[-] set
        \begin{eqnarray*}
        % \nonumber to remove numbering (before each equation)
          H=(h_{i,j}):=\HNF(V) &,& \widehat{H}=(\hat{h}_{i,j}):=\HNF(\widehat{V}) \\
          U\in\GL(n,\Z) &:& U\cdot V=H \\
          \widehat{U}\in\GL(n,\Z) &:& \widehat{U}\cdot \widehat{V}=\widehat{H} \\
          \b_H=(b_{i,j}) &:=& U\cdot\b\cdot\widehat{U}^{-1}\,,
        \end{eqnarray*}
        \item[-] then $V=\b\cdot \widehat{V}\ \Rightarrow\ H=\b_H\cdot\widehat{H}$\,: since, up to a simultaneous permutations of columns of $V$ and $\widehat{V}$, one can assume that both the submatrices $H_{\{1,\ldots,n\}}$ and $\widehat{H}_{\{1,\ldots,n\}}$ are upper triangular, then also $\b_H$ turns out to be upper triangular,
        \item[-] one can then get $\b_H$ from $H$ and $\widehat{H}$, by the following recursive relations
        \begin{eqnarray}\label{ricorsive}
          \forall\,1\leq i\leq n \quad b_{i,i}&=&\frac{h_{i,i}}{\hat{h}_{i,i}} \\
          \nonumber
          \forall\,i+1\leq j\leq n \quad b_{i,j}&=&\frac{1}{\hat{h}_{i,i}}\left(h_{i,j}-\sum_{k=i}^{j-1}b_{i,k}\hat{h}_{k,j}\right)
        \end{eqnarray}
              \end{itemize}
  \item[iv.] let $\Delta:=\SNF(\b)$ and $\mu,\nu\in\GL_n(\Z)$ be matrices realizing relations (\ref{betaSmith}); setting $V':=\nu^{-1}\cdot\widehat{V}$, construct the submatrix $_sV'$ as in part 4 of Theorem \ref{thm:generazione}; then the rows of $_sV'$ describe a set of generators $\{d_X(T_k)\}_{k=1}^s$ of $\Tors(\Cl(X))$, as in (\ref{generazione-tors});
  \item[v.] get matrices $W$, $G$ and $U_G$ as in part 5 of Theorem \ref{thm:generazione} and construct the torsion matrix $\Ga$ as in (\ref{Gamma}), giving a representative matrix of the torsion part $\tau_X$ of the morphism $d_X$;
  \item[vi.] apply procedure \cite[\S~1.2.3]{RT-LA&GD}, also based on the $\HNF$ algorithm, to get a $(n+r)\times (n+r)$ matrix $C_X$ whose rows give a basis of $\Cart(X)\leq\Weil(X)\cong\Z^{|\Si(1)|}$;
  \item[vii.]  apply procedure described in part 6 of Theorem \ref{thm:generazione} to get a system of  generators of $\Pic(X)$ in $ \Cl(X)\,.$ Precisely, let $A\in\GL_{n+r}(\Z)$ be a switching matrix such that $\HNF(C_X\cdot Q^T)=A\cdot C_X \cdot Q^T$, and put
  \begin{equation}
  \label{eq:BX}
  B_X=\ ^r(A\cdot C_X \cdot Q^T),\quad \Theta_X=\ ^r{(A\cdot C_X \cdot \Gamma^T)}\end{equation} \\
 Then the rows of the matrices $B_X$ and $\Theta_X$ represent  respectively the free part and the torsion part of a basis of $\Pic(X)$ in $\Cl(X)$, where the latter is identified to $ \Z^r\oplus\bigoplus_{k=1}^s\Z/\tau_k\Z$ by (\ref{Chow-decomposizione-tors}).

Moreover:
\begin{itemize}
  \item recall that, for the universal 1--covering $Y$ of $X$, once fixed the basis $\{\widehat{D}_j\}_{j=1}^{n+r}$ of $\mathcal{W}_T(Y)\cong\Z^{n+r}$ and the basis $\{d_Y(\widehat{L}_i)\}_{i=1}^r$ of $\Cl(Y)\cong\Z^r$, constructed as in (\ref{generazione}) by replacing $D_j$ with $\widehat{D}_j$ (see (11) in \cite[Thm.~2.9]{RT-LA&GD}), one gets  the following commutative diagram
  \begin{equation*}
\def\objectstyle{\displaystyle}
\xymatrix{
& 0 \ar[d] && 0 \ar[d] && 0 \ar[d] & \\
0 \ar[r] & M \ar[rr]^-{\left(
                                         \begin{array}{c}
                                           \mathbf{0}_{n,r}\,|\,\mathbf{I}_n  \\
                                         \end{array}
                                       \right)}\ar@{=}[d] &&
\mathcal{C}_T(Y)\cong\Pic(Y)\oplus M \ar[rr]^-{\left(
                                         \begin{array}{c}
                                           \mathbf{I}_r\,|\,\mathbf{0}_{r,n}  \\
                                         \end{array}
                                       \right)}\ar[d]^-{C_Y^T} && {\Pic(Y)} \ar[r]\ar[d]^-{B_Y^T} & 0 \\
0 \ar[r] & M \ar[rr]^-{div_Y}_-{\widehat{V}^T}\ar[d] && \mathcal{W}_T(Y)=\bigoplus_{j=1}^{n+r} \Z \cdot D_{j}\ar[d]
\ar[rr]^-{d_Y}_-Q && \Cl(Y)\ar[d] \ar[r] & 0 \\
 & 0\ar[rr] && \mathcal{T}_Y\ar[d]\ar[rr]^-{\cong} && \mathcal{T}_Y\ar[r]\ar[d]&0\\
 & &&0&&0& }
\end{equation*}
where $B_Y$ is the $r\times r$ matrix constructed in \cite[Thm.~2.9(3)]{RT-LA&GD} and
\begin{equation*}
    C_Y=\begin{pmatrix}B_Y & \mathbf{0}_{r,n}\\  \mathbf{0}_{n,r}& \mathbf{I}_{n}\end{pmatrix}\cdot U_Q= \begin{pmatrix}B_Y\cdot\,^rU_Q\\  \widehat{V}\end{pmatrix}\,,
\end{equation*}
\item once fixed the basis $\{D_j\}_{j=1}^{n+r}$ for $\mathcal{W}_T(X)\cong\Z^{n+r}$ and the basis $\{d_X(L_i)\}_{i=1}^r$ of the free part $F\cong\Z^r$ of $\Cl(X)$, constructed in (\ref{generazione}), one gets  the following commutative diagram
  \begin{equation*}
\def\objectstyle{\displaystyle}
\xymatrix{
& 0 \ar[d] && 0 \ar[d] && 0 \ar[d] & \\
0 \ar[r] & M \ar[rr]^-{\left(
                                         \begin{array}{c}
                                           \mathbf{0}_{n,r}\,|\,\mathbf{I}_n  \\
                                         \end{array}
                                       \right)}\ar@{=}[d] &&
\mathcal{C}_T(X)\cong\Pic(X)\oplus M \ar[rr]^-{\left(
                                         \begin{array}{c}
                                           \mathbf{I}_r\,|\,\mathbf{0}_{r,n}  \\
                                         \end{array}
                                       \right)}\ar[d]^-{C_X^T} && {\Pic(X)} \ar[r]\ar[d]^-{B_X^T\oplus \Theta_X^T} & 0 \\
0 \ar[r] & M \ar[rr]^-{div_X}_-{V^T}\ar[d] && \mathcal{W}_T(X)=\bigoplus_{j=1}^{n+r} \Z \cdot D_{j}\ar[d]
\ar[rr]^-{d_X=f_X\oplus\tau_X}_-{Q\oplus\Ga} && \Cl(X)\ar[d] \ar[r] & 0 \\
 & 0\ar[rr] && \mathcal{T}_X\ar[d]\ar[rr]^-{\cong} && \mathcal{T}_X\ar[r]\ar[d]&0\\
 & &&0&&0& }
\end{equation*}

\end{itemize}

Moreover:
\begin{itemize}
\item recall the following commutative diagram of short exact sequences
  \begin{equation}\label{div-diagram-covering}
    \begin{array}{c}
      \xymatrix{&&&0\ar[d]&\\
& 0 \ar[d] & 0 \ar[d] & \ker(\overline{\a})=\Tors(\Cl(X)) \ar[d] & \\
0 \ar[r] & M \ar[r]^-{div_X}_-{V^T}\ar[d]_-{\b^T} &
\mathcal{W}_T (X)=\Z^{|\Si(1)|} \ar[r]^-{d_X}\ar[d]^-{\a}_-{\mathbf{I}_{n+r}} & \Cl(X) \ar[r]\ar[d]^-{\overline{\a}} & 0 \\
0 \ar[r] & M \ar[r]^-{div_Y}_-{\widehat{V}^T}\ar[d]&\mathcal{W}_T(Y)=\Z^{|\widehat{\Si}(1)|}\ar[r]^-{d_Y}\ar[d] & \Cl (Y) \ar[r]\ar[d] & 0 \\
 & \coker(\b^T)\cong\Tors(\Cl(X))\ar[d] & 0 & 0 & \\
 &0&&&}
    \end{array}
\end{equation}
\end{itemize}
then, putting all together, one gets the following 3--dimensional commutative diagram
%\begin{landscape}
\begin{equation}\label{diagramma3D}
\begin{array}{c}
  \xymatrix{M\ar@{=}[dddd]\ar@{^{(}->}[rrr]^-{div_X}_-{\left(
                                         \begin{array}{c}
                                           \mathbf{0}_{n,r}\,|\,\mathbf{I}_n  \\
                                         \end{array}
                                       \right)}\ar@{^{(}->}[dr]^-{\b^T}&&&\Cart(X)\ar@{^{(}->}[dr]^-{\a_|}_>>>>>>{(C_X\cdot C_Y^{-1})^T}\ar@{->>}[rrr]^-{{d_X}_|}_-{\left(
                                         \begin{array}{c}
                                           \mathbf{I}_r\,|\,\mathbf{0}_{r,n}  \\
                                         \end{array}
                                       \right)}\ar@{^{(}->}[dddd]_>>>>>>>>>>>>>{C_X^T}&&&
  \Pic(X)\ar@{^{(}->}[dr]^-{\overline{\a}_|}_>>>>>>{(B_X\cdot B_Y^{-1})^T}\ar@{^{(}->}[dddd]\ar@{^{(}->}[dddd]_>>>>>>>>>>>>>>>{B_X^T\oplus\Theta_X^T}&&&&\\
             &M\ar@{^{(}->}[rrr]^-{div_Y}_-{\left(
                                         \begin{array}{c}
                                           \mathbf{0}_{n,r}\,|\,\mathbf{I}_n  \\
                                         \end{array}
                                       \right)}\ar@{=}[dddd]\ar@{->>}[dr]&&&\Cart(Y)\ar@{->>}[rrr]^-{{d_Y}_|}_-{\left(
                                         \begin{array}{c}
                                           \mathbf{I}_r\,|\,\mathbf{0}_{r,n}  \\
                                         \end{array}
                                       \right)}\ar@{^{(}->}[dddd]_-{C_Y^T}\ar@{->>}[dr]&&&\Pic(Y)\ar@{->>}[dr]\ar@{^{(}->}[dddd]_-{B_Y^T}&&\\
             &&\coker(\b^T)\ar@{=}[dddd]\ar@{^{(}->}[rrr]&&&\coker(\a_|)\ar@{->>}[rrr]&&&\coker(\overline{\a}_|)\\
             &&&&&\ker(\overline{\a})\ar@{^{(}->}[dddd]\ar@{^{(}->}[dr]&&&&\\
             M\ar@{^{(}->}[rrr]^-{div_X}_-{V^T}\ar@{^{(}->}[dr]^-{\b^T}&&&\Weil(X)\ar@{->>}[dddd]\ar@{->>}[rrr]^-{d_X=f_X\oplus\tau_X}_-{Q\oplus\Ga}\ar[dr]^-{\a}_{\mathbf{I}_{n+r}}&&&\Cl(X)\ar@{->>}[dr]^-{\overline{\a}}_-{\mathbf{I}_r\oplus\mathbf{0}_r}\ar@{->>}[dddd]&&&&\\
             &M\ar@{^{(}->}[rrr]^-{div_Y}_-{\widehat{V}^T}\ar@{->>}[dr]&&&\Weil(Y)\ar@{->>}[dddd]\ar@{->>}[rrr]^-{d_Y}_-{Q}&&&\Cl(Y)\ar@{->>}[dddd]&&\\
             &&\coker(\b^T)&&&&&&\\
             &&\mathcal{K}\ar@{^{(}->}[dr]\ar[rrr]_-{\cong}&&&\mathcal{K}\ar@{^{(}->}[dr]&&\\
             &&&\mathcal{T}_X\ar@{->>}[dr]\ar[rrr]_-{\cong}&&&\mathcal{T}_X\ar[dr]&&\\
             &&&&\mathcal{T}_Y\ar[rrr]_-{\cong}&&&\mathcal{T}_Y}
\end{array}
\end{equation}
%\end{landscape}
 The Snake Lemma implies
\begin{eqnarray*}
% \nonumber to remove numbering (before each equation)
  \coker(\b^T)&\cong&\ker(\overline{\a})\cong\Tors(\Cl(X))\\
  \mathcal{K}&\cong&\coker(\a_|)\cong\Cart(Y)/\Cart(X)
\end{eqnarray*}
so giving the following short exact sequences on torsion subgroups
\begin{equation}\label{torsione}
  \xymatrix{&&0\ar[d]&\\
            0\ar[r]&\Tors(\Cl(X))\ar[r]&\Cart(Y)/\Cart(X)\ar[r]\ar[d]&\Pic(Y)/\Pic(X)\ar[r]&0\\
            &&\Cl(X)/\Pic(X)\ar[d]&\\
            &&\Cl(Y)/\Pic(Y)\ar[d]&\\
            &&0&}
\end{equation}
\end{itemize}}
\end{remark}

\section{Going back: from the geometric quotient to the fan}\label{sez:fan}

In the present section we want to reverse our point of view. Namely assume that a $\Q$--factorial complete $n$--dimensional toric variety $X$ of rank $r$ is presented as a geometric quotient as follows:
\begin{itemize}
  \item let $Z$ be an algebraic subset of $\C^{n+r}$ defined by a suitable monomial ideal $B\subseteq\C[x_1,\ldots,x_{n+r}]$,
  \item consider the reductive subgroup $G=\Hom(A,\C^*)\subseteq(\C^*)^{n+r}$ where $A$ is a finitely generated abelian group of rank $r$; this means that
      $$G\cong(\C^*)^r\oplus\bigoplus_{k=1}^s\mu_{\tau_k}$$
      where $\mu_{\tau_k}$ is the cyclic group of $\tau_k$--th roots of unity with $1<\tau_1\,|\,\cdots\,|\,\tau_s$;
  \item assume that the action of $G$ over $\C^{n+r}$ is equivariant with respect to the usual multiplication of $(\C^*)^{n+r}$, meaning that for every $g=(\t,\boldsymbol\varepsilon)\in G$, with $\t=(t_1,\ldots,t_r)\in(\C^*)^r$ and $\boldsymbol\varepsilon=(\varepsilon_1,\ldots,\varepsilon_s)\in \bigoplus_{k=1}^s\mu_{\tau_k}$, there exist matrices
      \begin{equation}\label{matrici}
        Q=(q_{ij})\in \mathbf{M}(r,n+r;\Z)\quad,\quad C=(c_{kj})\in\mathbf{M}(s,n+r;\Z)
      \end{equation}
      such that the action is given by the usual multiplication as follows
     \begin{equation}\label{moltiplicazione}
        \forall\,\x\in\C^{n+r}\quad(\t,\boldsymbol\varepsilon)\cdot \x = \left(\prod_{i=1}^r t_{i}^{q_{ij}}\cdot\prod_{k_=1}^s
        \varepsilon_k^{a_k c_{kj}}\cdot x_j\right)_{1\leq j\leq n+r}
     \end{equation}
  \item assume that the given action $G\times\C^{n+r}\to\C^{n+r}$ defines a geometric quotient $(\C^{n+r}\backslash Z)/G$ giving precisely the toric variety $X$; this is possible for every $\Q$--factorial complete toric variety by the well known Cox' result \cite{Cox}.
\end{itemize}

\begin{remark}\label{rem:dati-quot} {\rm By (\ref{matrici}) and (\ref{moltiplicazione}) $X$ is completely assigned by the following three data:
\begin{itemize}
  \item[i.] a reduced $W$--matrix $Q=(q_{ij})$ defining the action of the free part $(\C^*)^r$ of $G$: the fact that $Q$ can be assumed to be a $W$--matrix follows by the completeness of $X$ \cite[Thm.~3.8]{RT-LA&GD}; moreover a $W$--matrix and its reduction define isomorphic actions of the free part of $G$;
  \item[ii.] the \emph{torsion} matrix $\Ga=(\g_{kj})$, with $\g_{kj}=[c_{k,j}]_{\tau_k}\in\Z/\tau_k\Z$, defining the action of the torsion part of $G$: in fact such an action is invariant with respect to the choice of different representatives of the class $\g_{k,j}$;
  \item[iii.] the algebraic subset $Z\subseteq\C^{n+r}$.
\end{itemize}
\begin{description}
  \item[Wanted] \emph{a fan matrix $V$ of $X$ and a fan $\Si\in\SF(V)$ defining $X$.}
\end{description}
Let us first of all notice that Theorem \ref{thm:covering&quotient} gives immediately the PWS $Y$ which is the universal 1--covering of $X$:  this is obtained by Gale duality since $\widehat{V}=\G(Q)$ is a $CF$--matrix (by \cite[Prop.~3.12(1)]{RT-LA&GD}), hence giving a fan matrix of $Y$. Moreover the \emph{irrelevant ideal} $B$ defining $Z$ reconstructs the $n$--skeleton of a fan $\widehat{\Si}\in\SF(\widehat{V})$ obtained as the fan of all the faces of every cone in $\widehat{\Si}(n)$.

Therefore to get $V$ and a fan $\Si$ of $X$ it suffices to recover a matrix
\begin{equation*}
    \b\in\GL_n(\Q)\cap\mathbf{M}_n(\Z)\ :\ \b\cdot\widehat{V}=V
\end{equation*}
as in (\ref{universalità}). Then the fan $\Si\in\SF(V)$ is obtained from $\widehat{\Si}\in\SF(\widehat{V})$, by reversing the process described in the proof of Theorem \ref{thm:covering&quotient}, that is, by means of the subset $I_{\widehat{\Si}}\subseteq\mathfrak{P}(\{1,\ldots,n+r\})$, as defined in Remark \ref{rem:universale}.}
\end{remark}

\begin{theorem}\label{thm:da-quot-a-fan} Given a reduced $W$--matrix $Q$ and a torsion matrix $\Ga$, as in parts \emph{i} and \emph{ii} of Remark \ref{rem:dati-quot}, describing a $Q$--factorial complete toric variety $X$ as a geometric quotient, and setting $\widehat{V}=\G(Q)$, then a matrix $\b\in\GL_n(\Q)\cap\mathbf{M}_n(\Z)$ such that $V=\b\cdot\widehat{V}$ is a fan matrix of $X$ can be reconstructed as follows:
\begin{enumerate}
  \item define a matrix $C\in\mathbf{M}(s,n+r;\Z)$ by choosing a representative $c_{kj}$ for any entry $\g_{kj}$ of $\Ga$,
  \item consider the matrix $K:=\left(
                                  \begin{array}{ccc}
                                    &\widehat{V}\cdot C^T& \\
                                    \tau_1&\cdots&0\\
                                    \vdots&\ddots&\vdots\\
                                    0&\cdots&\tau_s\\
                                  \end{array}
                                \right)\in \mathbf{M}(n+s,s;\Z)$
  \item let $U\in\GL_{n+s}(\Z)\,:\,\HNF(K)=U\cdot K$
\end{enumerate}
then, recalling notation in \ref{ssez:lista},
$$\b=({_nU})_{\{\1,\ldots,n\}}$$
\end{theorem}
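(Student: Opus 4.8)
The plan is to prove that $\beta=({_nU})_{\{1,\ldots,n\}}$ is an integral, nonsingular matrix satisfying $\mathcal{L}_r(\beta\widehat V)=\ker(d_X)$; by the uniqueness up to left $\GL_n(\Z)$--action inherent in (\ref{universalità}) (two integer matrices with the same row lattice differ by a unimodular factor), this forces $V=\beta\widehat V$ to be a fan matrix of $X$. I would first translate the statement into a purely lattice--theoretic problem. Since $\widehat V=\G(Q)$ is a $CF$--matrix, $\widehat V^T$ is injective with image $\ker(Q)\cap\Z^{n+r}=\mathcal{L}_r(\widehat V)$, so $\mathbf{p}\mapsto\mathbf{p}\,\widehat V$ is a bijection $\Z^{1\times n}\to\mathcal{L}_r(\widehat V)$. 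From the diagram (\ref{div-diagram-covering}), where $d_X$ is represented by $Q\oplus\Ga$, the divisor $\mathbf{p}\,\widehat V$ lies in $\ker(d_X)$ exactly when $\Ga(\mathbf{p}\,\widehat V)^T\equiv\mathbf{0}\ (\mathrm{mod}\ \boldsymbol\tau)$. Because each entry satisfies $\gamma_{kj}=[c_{kj}]_{\tau_k}$, a direct computation gives $\Ga\,\widehat V^T\equiv C\,\widehat V^T=M^T\ (\mathrm{mod}\ \boldsymbol\tau)$ row by row, with $M:=\widehat V C^T$. Hence $\mathbf{p}\,\widehat V\in\ker(d_X)$ iff $\mathbf{p}\in\Lambda:=\{\mathbf{p}\in\Z^{1\times n}:\mathbf{p}M\equiv\mathbf{0}\ (\mathrm{mod}\ \boldsymbol\tau)\}$, so it suffices to produce a $\Z$--basis of $\Lambda$ (which has rank $n$, being of finite index as $\mathrm{lcm}(\tau_k)\,\Z^{1\times n}\subseteq\Lambda$).

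Next I would exploit the Hermite form. The matrix $K=\left(\begin{smallmatrix}M\\ \diag(\tau_1,\ldots,\tau_s)\end{smallmatrix}\right)$ has full column rank $s$, so in the convention used throughout the paper its Hermite form is $\HNF(K)=\left(\begin{smallmatrix}H'\\ \mathbf{0}_{n,s}\end{smallmatrix}\right)$, with $H'$ a nonsingular $s\times s$ block on top and the bottom $n$ rows vanishing. Reading $U\cdot K=\HNF(K)$ in its lower $n$ rows yields ${_nU}\cdot K=\mathbf{0}$. The key step is the standard HNF kernel fact: since $U\in\GL_{n+s}(\Z)$ and the top block $H'$ is nonsingular, the rows of ${_nU}$ form a $\Z$--basis of the left kernel lattice $\mathcal{K}_L:=\{(\mathbf{p}\mid\mathbf{r})\in\Z^{1\times n}\times\Z^{1\times s}:\mathbf{p}M+\mathbf{r}\,\diag(\tau_k)=\mathbf{0}\}$. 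This follows by transporting the relation $\mathbf{y}K=\mathbf{0}$ through $U^{-1}$ and using the nonsingularity of $H'$ to conclude that $\mathbf{y}U^{-1}$ has vanishing first $s$ coordinates, i.e. $\mathbf{y}$ lies in the span of the bottom $n$ rows of $U$.

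Finally I would identify $\mathcal{K}_L$ with $\Lambda$ and conclude. The projection $(\mathbf{p}\mid\mathbf{r})\mapsto\mathbf{p}$ is injective on $\mathcal{K}_L$ (since $\diag(\tau_k)$ is nonsingular) and lands in $\Lambda$, because $\mathbf{p}M=-\mathbf{r}\,\diag(\tau_k)$ forces $(\mathbf{p}M)_k\equiv0\ (\mathrm{mod}\ \tau_k)$; conversely any $\mathbf{p}\in\Lambda$ lifts via $\mathbf{r}:=-\mathbf{p}M\,\diag(\tau_k)^{-1}\in\Z^{1\times s}$. Thus the projection is a lattice isomorphism $\mathcal{K}_L\xrightarrow{\ \sim\ }\Lambda$, and applying it to the basis given by the rows of ${_nU}$ shows that the rows of $P:=({_nU})_{\{1,\ldots,n\}}$ form a $\Z$--basis of $\Lambda$. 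Translating back through $\widehat V^T$, the rows of $P\,\widehat V$ form a $\Z$--basis of $\ker(d_X)$; hence $P\,\widehat V=g\,V_0$ for some $g\in\GL_n(\Z)$, where $V_0=\beta_0\widehat V$ is the fan matrix furnished by (\ref{universalità}). Since $\widehat V^T$ is injective this gives $P=g\beta_0\in\GL_n(\Q)\cap\mathbf{M}_n(\Z)$, so $\beta:=P$ satisfies $V=\beta\,\widehat V$ with $V$ a fan matrix of $X$, as claimed. The main obstacle I expect is the careful bookkeeping of the two reductions modulo $\boldsymbol\tau$ --- the passage from the class matrix $\Ga$ to the chosen integer representative $C$, and the resulting congruence $\Ga\,\widehat V^T\equiv M^T$ --- together with pinning down the HNF kernel fact in the precise placement convention (nonzero block on top, zero rows at the bottom) used in this paper.
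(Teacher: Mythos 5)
Your proof is correct and takes essentially the same route as the paper's: both encode the torsion condition $\mathbf{p}\,\widehat{V}\cdot\Ga^T\equiv\mathbf{0}\ (\mathrm{mod}\ \boldsymbol\tau)$ via integer representatives $C$ and slack variables into the matrix $K$, compute $U\in\GL_{n+s}(\Z)$ with $U\cdot K=\HNF(K)$, and read $\b$ off the bottom-left $n\times n$ block of $U$. Your write-up is in fact more careful than the paper's at the one delicate point, namely showing that the rows of ${_nU}$ form a $\Z$--basis of the full left-kernel lattice and hence that $\mathcal{L}_r(\b\cdot\widehat{V})$ equals $\ker(d_X)$ rather than merely being contained in it --- the paper only verifies $\b\cdot\widehat{V}\cdot\Ga^T=0\ \mathrm{mod}\ \boldsymbol\tau$ and leaves this exactness implicit.
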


\begin{proof} A fan matrix $V=(v_{ij})$ of $X$ has to satisfy the relations
\begin{equation}\label{relazioni}
    V\cdot Q^T=0\quad,\quad V\cdot\Ga^T= 0 \mod \boldsymbol\tau
\end{equation}
where the latter means that $\sum_{j=1}^{n+r} v_{ij}\g_{kj}=[0]_{\tau_k}$, for every $1\leq i\leq n\,,\,1\leq k\leq s$. Since by (\ref{universalità}) there exists a matrix $\b$ such that $V=\b\cdot\widehat{V}$, the former relation is a consequence of the fact that $\widehat{V}=\G(Q)$. Setting $\b=(x_{l,i})$ and $\widehat{V}=(\widehat{v}_{ij})$, the second relation in (\ref{relazioni}) can be rewritten as follows
\begin{eqnarray*}
    \forall\,1\leq l\leq n\,,\,1\leq k\leq s&& \sum_{i=1}^n\left(\sum_{j=1}^{n+r} v_{ij}\g_{kj}\right)x_{l,i} = [0]_{\tau_k}\\
    &\Longleftrightarrow& \sum_{i=1}^n\left(\sum_{j=1}^{n+r} v_{ij}c_{kj}\right)x_{l,i}+{\tau_k}(-x_{l+k}) =0
\end{eqnarray*}
where $c_{kj}$ is a representative of $\g_{kj}$. Then we are looking for $n$ independent integer common solutions of a system of $s$ linear homogeneous equations in $n+s$ variables whose matrix is given by $K^T$, where $K$ is defined as in part 2 of the statement. These solutions are given by the lower $n$ rows of a matrix
$$U\in\GL_{n+s}(\Z)\,:\quad U\cdot K=\HNF(K)=\left(
                                           \begin{array}{c}
                                             T_s \\
                                             \mathbf{0}_{n,s} \\
                                           \end{array}
                                         \right)
$$
where $T_s$ is an upper triangular matrix. In particular the first $n$ columns of ${_nU}$ give a $n\times n$ matrix $\b$ such that $$\b\cdot\widehat{V}\cdot\Ga^T=0\mod\boldsymbol\tau$$
ending up the proof.
\end{proof}

\begin{remark}\label{rem:isoquozienti} {\rm The previous Theorem \ref{thm:da-quot-a-fan} gives also a method to check if two geometric quotients giving $\Q$--factorial complete toric varieties are actually isomorphic. In fact by recovering the fan matrices this problem is turned into the problem of checking if the fan matrices are equivalent, up to a permutation on columns, in the sense of \cite[\S~3]{RT-LA&GD}, and if their respective fans are each other related by such a fan matrices equivalence. If only the latter fact is not true then the two given toric geometric quotients are still birationally isomorphic (and isomorphic in codimension 1 \cite[\S~15.3]{CLS}).}
\end{remark}

\section{Examples}\label{sez:Esempi} In this section we are going to present concrete applications of Theorems \ref{thm:covering&quotient}, \ref{thm:generazione} and \ref{thm:da-quot-a-fan}.

\begin{example}\label{ex:K2} {\rm The following example is the same given in \cite[Ex.~1.3]{Kasprzyk}, which is here re-discussed to introduce the reader to the use of all the above illustrated techniques in the easier case of an already known example with $r=1$.

Let us consider the 3-dimensional toric variety $X=X(\Si)$, $\Si=\fan(\v_1,\v_2,\v_3,\v_4)$ and $\v_i$ is the $i$-th column of the $3\times 4$ fan matrix
\begin{equation}\label{KPRZ-fan}
    V=\left(
      \begin{array}{cccc}
        1 & 0 & 1 & -2 \\
        0 & 1 & -3 & 2 \\
        0 & 0 & 5 & -5 \\
      \end{array}
    \right)\ .
\end{equation}
Let us follows the list from i. to vii. in Remark \ref{rem:}.

\noindent i. A matrix $U_V\in\GL_4(\Z)$ such that $\HNF(V^T)=U_V\cdot V^T$ is given by
\begin{equation*}
    U_V=\left(
          \begin{array}{cccc}
            1 & 0 & 0 & 0 \\
            0 & 1 & 0 & 0 \\
            -1 & 3 & 1 & 0 \\
            1 & 1 & 1 & 1 \\
          \end{array}
        \right)
\end{equation*}
whose last row gives the weight matrix $Q=\left(
                                            \begin{array}{cccc}
                                              1 & 1 & 1 & 1 \\
                                            \end{array}
                                          \right)
$ of $X$: then the universal 1--covering $Y$ of $X$ is the projective space $Y=\P^3$.

\noindent ii. A switching matrix $U_Q\in\GL_4(\Z)$ such that $\HNF(Q^T)=U_Q\cdot Q^T$ is given by
\begin{equation*}
    U_Q=\left(
          \begin{array}{cccc}
            1 & 0 & 0 & 0 \\
            1 & 0 & 1 & -2 \\
            0 & 1 & -3 & 2 \\
            0 & 0 & 1 & -1 \\
          \end{array}
        \right)
\end{equation*}
encoding a basis $\{d_X(L)\}$ of the free part $F\cong\Z$ of $\Cl(X)$, given by the first row and meaning that $L=D_1$, and a fan matrix $\widehat{V}$ of the universal 1--covering $Y=\P^3$, given by the further rows, that is,
\begin{equation*}
   \widehat{V}=\left(
          \begin{array}{cccc}
            1 & 0 & 1 & -2 \\
            0 & 1 & -3 & 2 \\
            0 & 0 & 1 & -1 \\
          \end{array}
        \right)\sim \left(\begin{array}{cccc}
            1 & 0 & 0 & -1 \\
            0 & 1 & 0 & -1 \\
            0 & 0 & 1 & -1 \\
          \end{array}
        \right)\,.
\end{equation*}
iii. One can then immediately get a matrix $\b$ such that $\b\cdot \widehat{V}=V$, that is, $\b=\diag(1,1,5)$ and already in $\SNF$.

\noindent iv. Therefore $s=1$ and the last row of $\widehat{V}$ gives a generator of
$$\Tors(\Cl(X))\cong \Z/5\Z\,,$$
namely $d_X(T)$ with
$T=D_3-D_4$. Hence
\begin{equation*}
    \Cl(X)=\Z\cdot  f_X(D_1)\oplus \Z \cdot \tau_X(D_3-D_4)\cong \Z\oplus \Z/5\Z.
\end{equation*}
v. A matrix $W\in\GL_4(\Z)$ such that $\HNF\left(({^3U})^T\right)=W\cdot ({^3U})^T$ is given by
\begin{equation*}
    W=\left(
          \begin{array}{cccc}
            1 & 0 & 0 & 0 \\
            1 & 0 & 1 & -2 \\
            0 & 1 & -3 & 2 \\
            0 & 0 & 1 & -1 \\
          \end{array}
        \right)
\end{equation*}
giving
\begin{eqnarray*}
    % \nonumber to remove numbering (before each equation)
      G &:=& {_1\widehat{V}}\cdot\ ({_{1}W})^T =\left(
                                                  \begin{array}{c}
                                                    1  \\
                                                  \end{array}
                                                \right)
       \\
    \end{eqnarray*}
Therefore
$$\Ga= {_{1}W} \mod 5= \left(
          \begin{array}{cccc}
            [0]_5 & [4]_5 & [2]_5 & [1]_5 \\
          \end{array}
        \right)\,.$$

\noindent vi. Applying procedure \cite[\S~1.2.3]{RT-LA&GD} as described in part 2 of Theorem \ref{thm:generazione}, one gets a $4\times 4$ matrix $C_X$ whose rows give a basis of $\Cart(X)$ inside $\Weil(X)\cong\Z^{|\Si(1)|}$. Namely
\begin{equation*}
    C_X=\left(
                                    \begin{array}{cccc}
                                      5 & 0 & 0 & 0 \\
                                      0 & 5 & 0 & 0 \\
                                      -3 & -3 & 1 & 0 \\
                                      -2 & -4 & 0 & 1 \\
                                    \end{array}
                                  \right)
\end{equation*}
meaning that $$\Cart(X)=\mathcal{L}\left( 5D_1,5D_2,-3D_1-3D_2+D_3,-2D_1-4D_2+D_4\right)\,.$$

\noindent On the other hand, by part 3 of \cite[Thm.~2.9]{RT-LA&GD}, a basis of
$\Cart(Y)\subseteq\Weil(Y)$ is given by the rows of
$$C_Y=\mathbf{I}_4\cdot U_Q=U_Q\in\GL_n(\Z)$$
giving $\Cart(Y)=\Weil(Y)$, as expected for $Y=\P^3$.

\noindent vii. A basis of $\Pic(X)$ inside $\Cl(X)$ is then  obtained by applying part 6 of Theorem \ref{thm:generazione}. With the notation of Remark \ref{rem:} vii, a switching matrix $A$ such that $A\cdot C_X\cdot Q^T$ is in $\HNF$ is
$$A=\left(
                                    \begin{array}{cccc}
                                      1 & 0 & 0 & 0 \\
                                      -1 & 1 & 0 & 0 \\
                                      1 & 0 & 1 & 0 \\
                                      1 & 0 & 0 & 1 \\
                                    \end{array}
                                  \right)$$

\noindent so that

$$
B_X=\ ^1(A\cdot C_X\cdot Q^T) =\left(
      \begin{array}{c}
        5 \\
      \end{array}
    \right)
$$
$$
\Theta_X=\ ^1(A\cdot C_X\cdot \Gamma^T) =\left(
      \begin{array}{c}
        0 \\
      \end{array}
    \right)
$$
Then
$$\Pic(X)\cong \Z[5d_X(D_1)]\leq\Z[d_X(D_1)]\oplus\Z/5\Z[d_X(D_3-D_4)]\cong\Cl(X)\ \Rightarrow\ \Cl(X)/\Pic(X)\cong\Z/5\Z\oplus\Z/5\Z\,.
$$

\noindent Since $|\SF(V)|=1$, matrices $Q$ and $\Ga$ suffice to give the geometric description of $X$ as Cox geometric quotient. Namely, saying $Z_{\Si}$ the exceptional subset of $\C^{n+r}$ determined by $\Si$, i.e. $Z_{\Si}=\{\mathbf{0}\}\subseteq\C^4$, the columns of the weight matrix $Q$ describe the exponents of the action of $\Hom(F,\C^*)\cong\C^*$ on $\C^4\setminus\{\mathbf{0}\}$ clearly giving $\P^3$, while the columns of the torsion matrix $\Ga$ describe the exponents of the action of $\Hom(\Tors(\Cl(X)),\C^*)\cong \mu_5$ on $Y=\P^3$ whose quotient gives $X$:
\begin{equation}\label{azione}
    \begin{array}{ccc}
       \mu_5\times\P^3 & \longrightarrow & \P^3 \\
       (\varepsilon,[x_1:\ldots :x_4]) & \,\mapsto & \left[x_1:\varepsilon^4 x_2:\varepsilon^2 x_3:\varepsilon x_4\right] \ .
     \end{array}
\end{equation}
As M.~Kasprzyk observes, this example has been firstly presented by M.~Reid as the quotient of $\P^3$ by the action
\begin{equation}\label{azione-Reid}
    \begin{array}{ccc}
       \mu_5\times\P^3 & \longrightarrow & \P^3 \\
       (\varepsilon,[x_1:\ldots :x_4]) & \,\mapsto & \left[\varepsilon x_1:\varepsilon^{2}x_2:\varepsilon^{3} x_3:\varepsilon^{4}x_4\right]
     \end{array}
\end{equation}
(see \cite{Reid85} Ex.~(4.15)).
Recalling Remark \ref{rem:isoquozienti}, to show that actions (\ref{azione}) and (\ref{azione-Reid}) give isomorphic quotients, one can go back obtaining the fan matrix of the geometric quotient (\ref{azione-Reid}) and check that, up to a permutation on columns, it is equivalent to the fan matrix $V$. Then let us follow steps (1), (2) and (3) of Theorem \ref{thm:da-quot-a-fan}. The data of the Reid's quotient (\ref{azione-Reid}) are the weight matrix $Q=\left(
                                               \begin{array}{cccc}
                                                 1 & 1 & 1 & 1 \\
                                               \end{array}
                                             \right)
$ and the torsion matrix $\Ga_R:=\left(
                                   \begin{array}{cccc}
                                     [1]_5 & [2]_5 & [3]_5 & [4]_5 \\
                                   \end{array}
                                 \right)
$. Let $C=\left(
            \begin{array}{cccc}
              1 & 2 & 3 & 4 \\
            \end{array}
          \right)
$ be a matrix of representative integers for $\Ga_R$, as in step (1) of Theorem \ref{thm:da-quot-a-fan}. The the matrix $K$ defined in step (2) of the same Theorem is given by
\begin{equation*}
    K=\left(
        \begin{array}{c}
          \widehat{V}\cdot C^T \\
          5 \\
        \end{array}
      \right) = \left(
                  \begin{array}{c}
                    -4 \\
                    1 \\
                    -1 \\
                    5 \\
                  \end{array}
                \right)
\end{equation*}
Step (3) then gives
\begin{equation*}
    \HNF(K)=\left(
              \begin{array}{c}
                1 \\
                0 \\
                0 \\
                0 \\
              \end{array}
            \right)=U\cdot K\quad\text{with}\quad U=\left(
                      \begin{array}{cccc}
                        0 & 1 & 0 & 0 \\
                        1 & 4 & 0 & 0 \\
                        0 & 1 & 1 & 0 \\
                        2 & 3 & 0 & 1 \\
                      \end{array}
                    \right)
\end{equation*}
Therefore
\begin{equation*}
    \b_R=\,({_3U})_{\{1,2,3\}}=\left(
                               \begin{array}{ccc}
                                 1 & 4 & 0 \\
                                 0 & 1 & 1  \\
                                 2 & 3 & 0 \\
                               \end{array}
                             \right)
\end{equation*}
giving the fan matrix
\begin{equation*}
    V_R:=\b_R\cdot\widehat{V}=\left(
                                \begin{array}{cccc}
                                  1 & 4 & -11 & 6 \\
                                  0 & 1 & -2 & 1 \\
                                  2 & 3 & -7 & 2 \\
                                \end{array}
                              \right)
\end{equation*}
The latter $F$--matrix turns out to be equivalent to the fan matrix $V$ in (\ref{KPRZ-fan}), up to a permutation on columns, since $V_R=R\cdot V\cdot S$ with $R\in\GL_3(\Z)$ and $S\in\GL_4(\Z)$ a permutation matrix, given by
$$
R=\left(
    \begin{array}{ccc}
      1 & -11 & -6 \\
      0 & -2 & -1 \\
      2 & -7 & -4 \\
    \end{array}
  \right)\quad,\quad S=\left(
                         \begin{array}{cccc}
                           1 & 0 & 0 & 0 \\
                           0 & 0 & 1 & 0 \\
                           0 & 1 & 0 & 0 \\
                           0 & 0 & 0 & 1  \\
                         \end{array}
                       \right)\ .
$$}
\end{example}

\begin{example}\label{ex:} {\rm This is a less trivial example for which the procedure of Remark \ref{rem:} turns out to be quite useful to get all the necessary information.

Let us consider the following matrix
\begin{equation}\label{V}
    V=\left(
      \begin{array}{cccccc}
        18 & -21 & -9 & 333 & -492 & 120\\
        -3 & 8 & 4 & -14 & 13 & -4\\
        -23 & 33 & 14 & -404 & 588 & -144\\
       -20 & 26 & 12 & -337 & 493 & -121
            \end{array}
    \right)\ .
\end{equation}
i. A matrix $U_V\in\GL_6(\Z)$ such that $\HNF(V^T)=U_V\cdot V^T$ is given by
\begin{equation*}
    U_V=\left(
          \begin{array}{cccccc}
            20 & 18 & 11 & 24 & 16 & 0\\
             5 & 4 & 4 & 6 & 4 & 0\\
            20 & 19 &  9 & 24 & 16 & 0\\
             12 & 10 &  9 & 15 &  10 &  0\\
            2 & 4 &  1 &  5 &  4 &  3\\
             1 &  1 & 3 & 2 & 3 & 7\\
          \end{array}
        \right)
\end{equation*}
whose last two rows give the $W$--matrix
$$Q=\left(
                                            \begin{array}{cccccc}
                                              2 & 4 &  1 &  5 &  4 &  3\\
                                              1 &  1 & 3 & 2 & 3 & 7\\
                                            \end{array}
                                          \right)\,.
$$
Therefore $V$ is a $F$--matrix \cite[Prop.~3.12(2)]{RT-LA&GD}. Consider $X(\Si)$ with $\Si\in\SF(V)$. Then $X$ is a 4--dimensional $\Q$--factorial complete toric variety of rank 2. One can check that $|\SF(V)|=3$, meaning that we have 3 choices for the fan $\Si$ i.e. for the toric variety $X(\Si)$.

\noindent ii. A switching matrix $U_Q\in\GL_6(\Z)$ such that $\HNF(Q^T)=U_Q\cdot Q^T$ is given by
\begin{equation*}
    U_Q=\left(
          \begin{array}{cccccc}
            5 &  -2 & -1 & 0 & 0 & 0\\
            2 & -1 & 0 & 0 & 0 & 0 \\
            11 & -5 & -2 & 0 & 0 & 0\\
            4 & -3 & -1 & 1 & 0 & 0\\
            7 & -4 & -2 & 0 & 1 & 0\\
            15 & -7 & -5 & 0 & 0 & 1\\
          \end{array}
        \right)
\end{equation*}
encoding a basis $\{d_X(L_1),d_X(L_2)\}$ of a free part $F\cong\Z^2$ of $\Cl(X)$, given by the upper two rows and meaning that
$$L_1=5D_1-2D_2 -D_3\quad,\quad L_2=2D_1 - D_2 \,,$$
and a fan matrix $\widehat{V}$ of the universal 1--covering $Y$, given by the further rows, that is,
\begin{equation}\label{VV}
   \widehat{V}=\left(
          \begin{array}{cccccc}
            11 & -5 & -2 & 0 & 0 & 0\\
            4 & -3 & -1 & 1 & 0 & 0\\
            7 & -4 & -2 & 0 & 1 & 0\\
            15 & -7 & -5 & 0 & 0 & 1\\
          \end{array}
        \right)\,.
\end{equation}
iii. To get a matrix $\b$ such that $\b\cdot \widehat{V}=V$ we need the $\HNF$ of both $\widehat{V}$ and $V$:
\begin{eqnarray*}
    \widehat{H}&:=&\HNF\left(\widehat{V}\right)=\left(
          \begin{array}{cccccc}
            1 & 0 & 0 & 16 & -25 & 6\\
            0 & 1 & 0 & 7 & -12 & 3\\
            0 & 0 & 1 & 4 & -6 & 1\\
            0 & 0 & 0 & 19 & -29 & 7\\
          \end{array}
        \right)\\
    \widehat{U}&=& \left(
          \begin{array}{cccc}
           2 & 16 & -25 & 6\\
            1 & 7 & -12 & 3\\
            1 & 4 & -6 & 1\\
            2 & 19 & -29 & 7\\
          \end{array}
        \right)\in\GL_4(\Z): \quad  \widehat{U}\cdot \widehat{V} = \widehat{H}\,.
\end{eqnarray*}
\begin{eqnarray*}
    H&:=&\HNF(V)=\left(
          \begin{array}{cccccc}
            1 & 0 & 2 & 100 & -153 & 36\\
            0 & 1 & 2 & 53 & -82 & 19\\
            0 & 0 & 3 & 69 & -105 & 24\\
            0 & 0 & 0 & 285 & -435 & 105\\
          \end{array}
        \right)\\
    U&=& \left(
          \begin{array}{cccc}
            -2 & 3 & -2 & 0\\
            0 & 1 & -1 & 1\\
            3 & -1 & -1 & 4\\
            -40 & 34 & -14 & -25\\
          \end{array}
        \right)\in\GL_4(\Z): \quad  U\cdot V = H\,.
\end{eqnarray*}
The given recursive relations (\ref{ricorsive}) define
\begin{equation*}
    \b_H=\left(
          \begin{array}{cccc}
            1 & 0 & 2 & 4\\
            0 & 1 & 2 & 2\\
            0 & 0 & 3 & 3\\
            0 & 0 & 0 & 15\\
          \end{array}
        \right)\ \Rightarrow\ \b=U^{-1}\cdot\b_H\cdot \widehat{U}=\left(
          \begin{array}{cccc}
            30 & 333 & -492 & 120\\
            2 & -14 & 13 & -4\\
            -33 & -404 & 588 & -144\\
           -28 & -337 & 493 & -121\\
          \end{array}
        \right)\,.
\end{equation*}
iv. Therefore $\Delta=\SNF(\b)$ and $\mu,\nu\in\GL_4(\Z):\Delta=\mu\cdot\b\cdot\nu$ are given by
\begin{eqnarray*}
    \Delta&=&\diag(1,1,3,15)\\
    \mu&=&\left(
          \begin{array}{cccc}
            1410 & -1138 & 551 & 780\\
            1140 & -916 & 420 & 661\\
            -1623 & 1304 & -598 & -941\\
            8425 & -6769 & 3104 & 4885\\
          \end{array}
        \right)\\
    \nu&=&\left(
          \begin{array}{cccc}
            1&58&2224&2022\\
            0&1&27&24\\
            0&0&1&1\\
            0&-2&-78&-71\\
          \end{array}
        \right)
\end{eqnarray*}
Then $s=2$ and the last two rows of
\begin{equation*}
    \widehat{V}'=\nu^{-1}\widehat{V}=\left(
          \begin{array}{cccccc}
            521&-251&-168&-2&14&28\\
            388&-222&-112&7&45&3\\
            -184&105&53&-2&-23&-1\\
            191&-109&-55&2&24&1\\
          \end{array}
        \right)
\end{equation*}
give the generators of
$$\Tors(\Cl(X))\cong \Z/3\Z\oplus\Z/15\Z\,,$$
namely $d_X(T_1)$ and $d_X(T_2)$ with
\begin{eqnarray*}
% \nonumber to remove numbering (before each equation)
  T_1 &=& -184 D_1+ 105 D_2 +53 D_3-2 D_4-23 D_5-D_6 \\
  T_2 &=& 191 D_1-109 D_2-55 D_3+2 D_4+ 24 D_5 + D_6
\end{eqnarray*}
Hence $$\Cl(X)=\Z\cdot f_X(L_1)\oplus \Z\cdot f_X(L_2)\oplus\Z\cdot \tau_X(T_1)\oplus \Z\cdot \tau_X(T_2)\cong \Z^2\oplus \Z/3\Z\oplus\Z/15\Z\,.$$

\noindent v. A matrix $U$ as defined in part 6 of Theorem \ref{thm:generazione} is given by
\begin{equation*}
    U=\left(
        \begin{array}{c}
          ^2U_Q \\
          \widehat{V}' \\
        \end{array}
      \right)
    =\left(
          \begin{array}{cccccc}
          2&-1&0&0&0&0\\
          -6&3&1&0&0&0\\
            521&-251&-168&-2&14&28\\
            388&-222&-112&7&45&3\\
            -184&105&53&-2&-23&-1\\
            191&-109&-55&2&24&1\\
          \end{array}
        \right)
\end{equation*}
A matrix $W\in\GL_6(\Z)$ such that $\HNF(({^4U})^T)=W\cdot(({^4U})^T)$ is given by
\begin{equation*}
    W=\left(
          \begin{array}{cccccc}
            -57&-115&3&-549&17&0\\
            4&8&1&3&7&0\\
            -125&-250&0&-1090&14&0\\
            -170&-340&0&-1482&19&0\\
            -188&-376&0&-1639&21&0\\
            -126&-252&0&-1092&13&1\\
          \end{array}
        \right)
\end{equation*}
then
\begin{equation*}
    G={_2\widehat{V}'}\cdot ({_2W})^T = \left(
                                          \begin{array}{cc}
                                            -2093&-1392\\
                                            2302&1531\\
                                          \end{array}
                                        \right)
\end{equation*}
A matrix $U_G\in\GL_2(\Z)$ such that $\HNF(G^T)=U_G\cdot G^T$ is given by
\begin{equation*}
    U_G=\left(
          \begin{array}{cc}
            1531&-2302\\
            1392&-2093\\
          \end{array}
        \right)
\end{equation*}
hence giving
\begin{eqnarray*}
  \Ga &=& {U_G}\ \cdot\ {_2W} \mod \boldsymbol\tau \\
  &=& \left(
                                                      \begin{array}{cccccc}
                                                        2224&4448&0&4475&2225&-2302\\ 2022&4044&0&4068&2023&-2093
                                                      \end{array}
                                                    \right)\mod \left(
                                                                  \begin{array}{c}
                                                                    3 \\
                                                                    15 \\
                                                                  \end{array}
                                                                \right)\\
   &=&
   \left(
          \begin{array}{cccccc}
          [1]_3&[2]_3   &[0]_3   &[2]_3   &[2]_3    &[2]_3\\
          {[12]_{15}} &[9]_{15}&[0]_{15}&[3]_{15}&[13]_{15}&[7]_{15}\\
          \end{array}
        \right)
\end{eqnarray*}
vi. Depending on the choice of the fan $\Si_i\in\SF(V)$, by applying procedure \cite[\S~1.2.3]{RT-LA&GD} as described in part 2 of Theorem \ref{thm:generazione}, one gets a $6\times 6$ matrix $C_{X,i}$ whose rows give a basis of $\Cart(X_i)$ inside $\Weil(X_i)\cong\Z^{|\Si_i(1)|}$. Namely
\begin{equation*}
    C_{X,1}=\left(\begin {array}{cccccc}
    265926375&0&0&0&0&0\\
    -148978500&825&0&0&0&0\\
    -58474020&-375&15&0&0&0\\
    37&-18&-7&1&0&0\\
    -58473933&-417&-3&0&3&0\\
    19&-8&-5&0&-1&1\end {array}
                                                                                                 \right)
\end{equation*}
\begin{equation*}
    C_{X,2}=\left( \begin {array}{cccccc}
    43543500&0&0&0&0&0\\
    -34716000&15&0&0&0&0\\
    -594165&0&30&0&0&0\\
    -34715963&-3&-7&1&0&0\\
    17655087&-12&-18&0&3&0\\
    19&-8&-5&0&-1&1
    \end {array} \right)
\end{equation*}
\begin{equation*}
    C_{X,3}=\left(\begin {array}{cccccc}
    43543500&0&0&0&0&0\\
    -37009500&825&0&0&0&0\\
    -6534165&-750&30&0&0&0\\
    37&-18&-7&1&0&0\\
    87&-42&-18&0&3&0\\
    19&-8&-5&0&-1&1\end {array}
                                                                                                 \right)
\end{equation*}
On the other hand, by parts (2) and (3) of \cite[Thm.~2.9]{RT-LA&GD}, one gets matrices $B_{Y,i}$, whose rows give a basis of $\Pic(Y_i)\leq\Cl(Y_i)$, and consequently matrices
\begin{equation*}
   \forall\,1\leq i\leq 3\quad C_{Y,i}=\begin{pmatrix}B_i & \mathbf{0}_{2,4}\\  \mathbf{0}_{4,2}& \mathbf{I}_4\end{pmatrix}\cdot U_Q
\end{equation*}
whose rows give a basis of $\Cart(Y_i)\leq\Weil(Y_i)$. Namely:
\begin{equation*}
    B_{Y,1}=\left(\begin{array}{cc}
                     5909475&0\\
                     -238040&165
                            \end{array}
                            \right)\,,\, B_{Y,2}=\left(\begin{array}{cc}
                            5805800&0\\
                            -4648596&1
                            \end{array}
                            \right)\,,\,B_{Y,3}=\left(\begin{array}{cc}
                            5805800&0\\
                            -217580&55\\
                            \end{array}
                            \right)
\end{equation*}
\begin{equation*}
    C_{Y,1}=\left(
                                                                                                   \begin{array}{cccccc}
                                                                                                     29547375&-11818950&-5909475&0&0&0\\
                                                                                                     -1189870&475915&238040&0&0&0\\
                                                                                                     11&-5&-2&0&0&0\\
                                                                                                     4&-3&-1&1&0&0\\
                                                                                                     7&-4&-2&0&1&0\\
                                                                                                     15&-7&-5&0&0&1\\
                                                                                                   \end{array}
                                                                                                 \right)
\end{equation*}
\begin{equation*}
    C_{Y,2}=\left(
                                                                                                   \begin{array}{cccccc}
                                                                                                     29029000&-11611600&-5805800&0&0&0\\
                                                                                                     -23242978&9297191&4648596&0&0&0\\
                                                                                                     11&-5&-2&0&0&0\\
                                                                                                     4&-3&-1&1&0&0\\
                                                                                                     7&-4&-2&0&1&0\\
                                                                                                     15&-7&-5&0&0&1\\
                                                                                                   \end{array}
                                                                                                 \right)
\end{equation*}
\begin{equation*}
    C_{Y,3}=\left(
                                                                                                   \begin{array}{cccccc}
                                                                                                     29029000&-11611600&-5805800&0&0&0\\
                                                                                                     -1087790&435105&217580&0&0&0\\
                                                                                                     11&-5&-2&0&0&0\\
                                                                                                     4&-3&-1&1&0&0\\
                                                                                                     7&-4&-2&0&1&0\\
                                                                                                     15&-7&-5&0&0&1\\
                                                                                                   \end{array}
                                                                                                 \right)
\end{equation*}

\noindent vii.  A basis of $\Pic(X_i)$ inside $\Cl(X_i)$ is then  obtained by applying part 6 of Theorem \ref{thm:generazione}. For $i=1,2,3$, matrices $A_i$ switching  $C_{X_i}\cdot Q^T$ in Hermite normal form are respectively
\begin{equation*}
A_1=\left(
      \begin{array}{cccccc}
       -351039&-449987&-449987&0&0&0\\
       -502913&-644670&-644670&0&0&0\\
       1&1&2&0&0&0\\
       0&0&0&1&0&0\\
       1&1&1&0&1&0\\
       0&0&0&0&0&1
      \end{array}
    \right)\end{equation*}

\begin{equation*}
A_2=\left(
      \begin{array}{cccccc}
       -93838&-117699&0&0&0&0\\
       -1157199&-1451450&0&0&0&0\\
       4&5&1&0&0&0\\
       0&-1&0&1&0&0\\
       -2&-2&0&0&1&0\\
       0&0&0&0&0&1
      \end{array}
    \right)\end{equation*}

    \begin{equation*}
    A_3=\left(
          \begin{array}{cccccc}
          -10317&-12139&0&0&0&0\\
          -22429&-26390&0&0&0&0\\
          1&1&1&0&0&0\\0&0&0&1&0&0\\
          0&0&0&0&1&0\\
          0&0&0&0&0&1
          \end{array}
        \right)\end{equation*}

giving
\begin{eqnarray*}
% \nonumber to remove numbering (before each equation)
  B_{X_1} &=& \ ^2(A_1\cdot C_{X_1}\cdot Q^T) =\left(
      \begin{array}{cc}
       825&185620050\\
      0&265926375
      \end{array}
    \right) \\
  B_{X_2} &=&  \ ^2(A_2\cdot C_{X_2}\cdot Q^T) =\left(
      \begin{array}{cc}
       60&1765515\\
       0&21771750
      \end{array}
    \right) \\
  B_{X_3} &=&  \ ^2(A_3\cdot C_{X_3}\cdot Q^T) =\left(
      \begin{array}{cc}
       3300&10016325\\
       0&21771750
      \end{array}
    \right)\\
  \Theta_{X_i}&=&   \ ^2(A_i\cdot C_{X_i}\cdot \Ga^T) =\left(
                                                         \begin{array}{cc}
                                                           \,[0]_3 & [0]_{15} \\
                                                           \,[0]_3 & [0]_{15} \\
                                                         \end{array}
                                                       \right)\ ,\quad\text{for $i=1,2,3$\,.}
\end{eqnarray*}

\noindent Finally, matrices $Q$ and $\Ga$  give the geometric description of $X(\Si)$ as a Cox geometric quotient. Namely, saying $Z=Z_{\Si}=Z_{\widehat{\Si}}$ the exceptional subset of $\C^{n+r}$ determined by $\Si\in\SF(V)$ (or equivalently by $\widehat{\Si}\in\SF(\widehat{V})$), whose explicit determination is left to the reader, the columns of the weight matrix $Q$ describe the exponents of the action of $\Hom(F,\C^*)\cong(\C^*)^2$ on $\C^6\setminus Z$, whose quotient gives $Y$; on the other hand the columns of the torsion matrix $\Ga$ describe the exponents of the action of $\Hom(\Tors(\Cl(X)),\C^*)\cong \mu_{3}\oplus\mu_{15}$ on $Y$, whose quotient gives $X$. Putting all together one gets the following action of $G(X)=\Hom(\Cl(X),\C^*)\cong(\C^*)^2\oplus\mu_{3}\oplus\mu_{15}$
\begin{equation*}
    g: \left((\C^*)^2\oplus\mu_{3}\oplus\mu_{15}\right)\times \left(\C^6\setminus Z\right) \longrightarrow  \C^6\setminus Z
\end{equation*}
defined by setting
\begin{eqnarray}\label{azione_g}
    &&g\left(((t_1,t_2),\varepsilon,\eta),(x_1,\ldots :x_6)\right):=\\
    \nonumber
    &&\left(t_1^2t_2\varepsilon\eta^{12}\ x_1,t_1^4t_2\varepsilon^2\eta^9\  x_2,t_1t_2^3 \ x_3, t_1^5t_2^2\varepsilon^2\eta^3\ x_4,t_1^4t_2^3\varepsilon^2\eta^{13}\ x_5,t_1^3t_2^7\varepsilon^2\eta^7\ x_6\right)
\end{eqnarray}}
\end{example}

\begin{example} {\rm This example is devoted to give an application of Theorem \ref{thm:da-quot-a-fan} by reversing the previous Example \ref{ex:}.

Let $X$ be the $\Q$--factorial complete toric variety defined by the toric geometric quotient
\begin{equation*}
    X=\left(\C^6\setminus Z\right)\,\left/_g\,\left((\C^*)^2\oplus\mu_{3}\oplus\mu_{15}\right)\right.
\end{equation*}
where $g$ is the action defined in (\ref{azione_g}). Our aim is recovering a fan matrix of $X$.

From the exponents of the action $g$ we get the two matrices:
\begin{eqnarray*}
% \nonumber to remove numbering (before each equation)
  \text{the $W$--matrix} && Q=\left(
                                            \begin{array}{cccccc}
                                              2 & 4 &  1 &  5 &  4 &  3\\
                                              1 &  1 & 3 & 2 & 3 & 7\\
                                            \end{array}
                                          \right) \\
  \text{the torsion matrix} && \Ga=\left(
          \begin{array}{cccccc}
          [1]_3&[1]_3   &[1]_3   &[0]_3   &[0]_3    &[0]_3\\
          {[8]_{15}} &[8]_{15}&[3]_{15}&[4]_{15}&[13]_{15}&[0]_{15}\\
          \end{array}
        \right)
\end{eqnarray*}
By Gale duality one gets immediately the $CF$--matrix $\widehat{V}=\G(Q)$ given in (\ref{VV}), which is a fan matrix of the PWS $Y$ giving the universal 1--covering of $X$. Hence $V$ is obtained by recovering a matrix $\b$ such that $V=\b\cdot \widehat{V}$, as in (\ref{universalità}). At this purpose let us apply Theorem \ref{thm:da-quot-a-fan}. Let
\begin{equation*}
    C=\left(
          \begin{array}{cccccc}
          1&1   &1   &0   &0    &0\\
          8 &8&3&4&13&0\\
          \end{array}
        \right)
\end{equation*}
be a matrix of representative integers of $\Ga$ and construct the matrix
\begin{equation*}
    K:=\left(
                                  \begin{array}{c}
                                    \widehat{V}\cdot C^T \\
                                    3\,\quad\, 0\\
                                    0\quad 15\\
                                  \end{array}
                                \right)=\left(
                                          \begin{array}{cc}
                                            4 & 42 \\
                                            0 & 9 \\
                                            1 & 31 \\
                                            3 & 49 \\
                                            3 & 0 \\
                                            0 & 15 \\
                                          \end{array}
                                        \right)
\end{equation*}
Then
\begin{equation*}
    \HNF(K)=\left(
              \begin{array}{cc}
                1 & 0 \\
                0 & 1 \\
                0 & 0 \\
                0 & 0 \\
                0 & 0 \\
                0 & 0 \\
              \end{array}
            \right)= U\cdot K\quad \text{with}\quad U=\left(
                                                \begin{array}{cccccc}
                                                  -5&-49&21&0&0&0\\
                                                  -1&-9&4&0&0&0\\
                                                  -9&-82&36&0&0&0\\
                                                  -8&-68&29&1&0&0\\
                                                  -3&-17&9&0&1&0\\
                                                  -3&-29&12&0&0&1\\
                                                \end{array}
                                              \right)
\end{equation*}
giving
\begin{equation*}
    \b=({_4U})_{\{1,2,3,4\}}=\left(
                               \begin{array}{cccc}
                                 -9&-82&36&0\\
                                                  -8&-68&29&1\\
                                                  -3&-17&9&0\\
                                                  -3&-29&12&0 \\
                               \end{array}
                             \right)
\end{equation*}
Therefore a fan matrix of $X$ is given by
\begin{equation*}
    V'':=\b\cdot\widehat{V}= \left(
                          \begin{array}{cccccc}
                            -175&147&28&-82&36&0\\
                            -142&121&21&-68&29&1\\
                            -38&30&5&-17&9&0\\
                            -65&54&11&-29&12&0\\
                          \end{array}
                        \right)
\end{equation*}
As a final remark, let us notice that the so obtained matrix $V''$ is actually equivalent to the matrix $V$ in (\ref{V}): in fact $R\cdot V=V''$ for
\begin{equation*}
    R=\left(
        \begin{array}{cccc}
          -646&512&-203&-416\\
          -533&422&-166&-345\\
          -143&113&-45&-92\\
          -237&188&-75&-152\\
        \end{array}
      \right)\in \GL_4(\Z)\,.
\end{equation*}}

\end{example}

\begin{acknowledgements}
We would like to thank Antonella Grassi for helpful conversations and suggestions. We also thank the unknown Referee whose comments sensibly improved the paper: in particular he suggested us the above proof of Thm.~\ref{thm:covering&quotient} which dramatically shortened our original proof.
\end{acknowledgements}

% BibTeX users please use one of
%\bibliographystyle{spbasic}      % basic style, author-year citations
%\bibliographystyle{spmpsci}      % mathematics and physical sciences
%\bibliographystyle{spphys}       % APS-like style for physics
%\bibliography{}   % name your BibTeX data base

% Non-BibTeX users please use

\newpage
\thispagestyle{empty}
\topskip0pt
\vspace*{\fill}
 In the following we report the erratum correcting only those parts of our paper published in \emph{Ann. Mat. Pur. Appl.} \textbf{196} (2017), 325--347, which are affected by the error in Proposition 3.1. It will be published soon in the Annali di Matematica Pura ed Applicata.
\vspace*{\fill}

\newpage
\setcounter{page}{1}

 \vskip1cm
\noindent{\Large \bf{Erratum to: A $\Q$--factorial complete toric variety is a quotient of a poly weighted space}}
\oneline
\centerline{\textbf{Michele Rossi and Lea Terracini} }
\oneline\oneline

\setcounter{section}{3}

After the publication of \cite{RT-QUOT}, we realized that Proposition~3.1, in that paper, contains an error, whose consequences are rather pervasive along the whole section 3 and for some aspects of examples 5.1 and 5.2. Here we give a complete account of needed corrections.

First of all \cite[Prop.~3.1]{RT-QUOT} has to be replaced by the following:

\begin{proposition}\label{prop:CartierYX} Let $X(\Si)$ be a $Q$--factorial complete toric variety and $Y(\widehat{\Si})$ be its universal 1-covering. Let $\{D_{\rho}\}_{\rho\in\Si(1)}$ and $\{\widehat{D}_{\rho}\}_{\rho\in\widehat{\Si}(1)}$ be the standard bases of $\Weil(X)$ and $\Weil(Y)$, respectively, given by the torus orbit closures of the rays. Then
\begin{equation*}
     D=\sum_{\rho\in\Si(1)} a_{\rho} D_{\rho}\in\Cart(X)\quad \Longrightarrow\quad \widehat{D}=\sum_{\rho\in\widehat{\Si}(1)} a_{\rho} \widehat{D}_{\rho}\in\Cart(Y)\,.
\end{equation*}
Therefore, under the identification $\Z^{|\Si(1)|}\cong\Weil(X)\stackrel{\a}{\cong}\Weil(Y)\cong\Z^{|\widehat{\Si}(1)|}$ realized by the isomorphism $D_{\rho}\stackrel{\a}{\mapsto}\widehat{D}_{\rho}$,
$$
\Cart(X)\cong\a(\Cart(X))\leq\Cart(Y)\leq\Weil(Y)
$$
is a chain of subgroup inclusions. Moreover the induced morphism $\overline{\a}:\Cl(X)\to\Cl(Y)$ is injective when restricted to $Pic(X)$, realizing the following further chain of subgroup inclusions
$$\Pic(X)\cong\overline{\a}(\Pic(X))\leq\Pic(Y)\leq\Cl(Y)$$.
\end{proposition}

\begin{proof} Let us fix a basis $\mathcal{B}$ of the $\Z$-module $M\cong\Z^n$ and let $V$ and $\widehat{V}$ be fan matrices representing the standard morphisms
$$
div_X:M\cong\Z^n \stackrel{V^T}\longrightarrow\Z^{|\Si(1)|}\cong\Weil(X)\quad,\quad div_Y:M\cong\Z^r \stackrel{\widehat{V}^T}\longrightarrow\Z^{|\widehat{\Si}(1)|}\cong\Weil(Y)
$$
Let $\b\in\GL_n(\Q)\cap\mathbf{M}_n(\Z)$ be  such that $V=\b\widehat{V}$ and so realizing an injective endomorphism of the $\Z$-module $M$.
The result follows by writing down the condition of being locally principal for a Weil divisor and observing that
\begin{eqnarray}\label{ISigma}
    \mathcal{I}^\Si&=&\{I\subseteq\{1,\ldots,n+r\}:\left\langle V^I\right\rangle\in\Si(n)\}\\
    \nonumber
    &=& \{I\subseteq\{1,\ldots,n+r\}:\left\langle \widehat{V}^I\right\rangle\in\widehat{\Si}(n)\}=\mathcal{I}^{\widehat{\Si}}
\end{eqnarray}
by the construction of $\widehat{\Si}\in\SF(\widehat{V})$, given the choice of $\Si\in\SF(V)$. Notice that $\mathcal{I}^\Si$ describes the complements of those sets described by $\mathcal{I}_\Si$, as defined in \cite[Rem.\,2.4]{RT-QUOT}.  In particular the Weil divisor $\sum_{j=1}^{n+r}a_jD_j\in\Weil(X)$ is Cartier if and only if
\begin{equation}\label{cartier}
    \forall\,I\in\mathcal{I}^\Si\quad\exists\,\mathbf{m}_I\in M : \forall\,j\not\in I\ \v_j^T\mathbf{m}_I=a_j\,,
\end{equation}
where $\v_j$ is the $j$-th column of $V$.
Then $\a(\sum_{j=1}^{n+r}a_jD_j)=\sum_{j=1}^{n+r}a_j\widehat{D}_j$ is a Cartier divisor since
\begin{equation*}
  \forall\,I\in\mathcal{I}^\Si\quad \forall\,j\not\in I \quad \widehat{\v}_j^T(\b^T\mathbf{m}_I)=a_j
\end{equation*}
where $\widehat{\v}_j$ is the $j$-th column of $\widehat{V}$. \\
The injectivity of $\overline{\alpha}$ follows from the well-known freeness of $\Pic(X)$.
\end{proof}

As a consequence, parts 1, 4, 5 of \cite[Thm.~3.2]{RT-QUOT} still hold, while parts 2, 3, 6, 7 have to be replaced by the following:

\begin{theorem}\label{thm:generazione} Let $X=X(\Si)$ be a $n$--dimensional $\Q$--factorial complete toric variety of rank $r$ and $Y=Y(\widehat{\Si})$ be its universal 1--covering. Let $V$ be a reduced fan matrix of $X$, $Q=\G(V)$ a weight matrix of $X$ and $\widehat{V}=\G(Q)$ be a $CF$--matrix giving a fan matrix of $Y$.
\begin{itemize}
\item[2.] Define $\mathcal{I}^\Si$ as in (\ref{ISigma}). For any $I\in\mathcal{I}^\Si$ let $E_I$ be the $r\times (n+r)$ matrix admitting as rows the standard basis vectors $e_i=(0,\ldots,0,\underset{i}{1},0,\ldots,0)$, for $i\in I$, representing the $i$-th basis divisor $D_i\in\Weil(X)\cong\Z^{|\Si(1)|}$. Set $\widetilde{V}_I:=\left(V^T\,|\,E_I^T\right)\in\mathbf{M}_{n+r}(\Z)$. Then Cartier divisors give rise to the following maximal rank subgroup of $\Weil(X)$
    \begin{equation*}
      \Cart(X)\cong \bigcap_{I\in\mathcal{I}^\Si} \mathcal{L}_c\left(\widetilde{V}_I\right)\leq \Z^{|\Si(1)|}\cong\Weil(X)
    \end{equation*}
  and a basis of $\Cart(X)\leq\Weil(X)$ can be explicitly computed by applying the procedure described in \cite[\S~1.2.3]{RT-LA&GD}.
\item[3.] Let $C_X\in\GL_{n+r}(\Q)\cap\mathbf{M}_{n+r}(\Z)$ be a matrix whose rows give a basis of $\Cart(X)$ in $\Weil(X)$, as obtained in the previous part 2. Identify $\Cl(X)$ with $\Z^r\oplus\bigoplus_{k=1}^s\Z/\tau_k\Z$ by item (c) of part 4 in \cite[Thm.~3.2]{RT-QUOT}, and represent the morphism $d_X$ by $Q\oplus \Gamma$, according to parts 1 and 5. Let $A\in\GL_{n+r}(\Z)$ be a matrix such that $A\cdot C_X \cdot Q^T$ is in $\HNF$. Let $\mathbf{c}_1,\ldots,\mathbf{c}_r$ be the first $r$ rows of the matrix $A\cdot C_X$ and for $i=1,\ldots r$ put $\mathbf{b}_i=Q\cdot\mathbf{c}_i^T + \Ga\cdot \mathbf{c}_i^T$. Then $\mathbf{b}_1,\ldots \mathbf{b}_r$ is a basis of the free group $\Pic(X)$ in $\Cl(X)$.
\item[6.] Given the choice of $\widehat{V}$ and $V$ as in the previous parts 4 and 5 of \cite[Thm.~3.2]{RT-QUOT}, consider
    \begin{eqnarray*}
    % \nonumber to remove numbering (before each equation)
      U&:=&\left(
           \begin{array}{c}
             ^rU_Q \\
             \widehat{V} \\
           \end{array}
         \right)\in\GL_{n+r}(\Z)\\
      W &\in&\GL_{n+r}(\Z) \ :\  W\cdot ({^{n+r-s}U})^T=\HNF\left(({^{n+r-s}U})^T\right) \\
      G &:=& {_s\widehat{V}}\cdot\ ({_{s}W})^T \in \mathbf{M}_s(\Z)\\
      U_G&\in&\GL_{s}(\Z) \ :\  U_G\cdot G^T =\HNF(G^T)\,.
    \end{eqnarray*}
    Then a ``torsion matrix''  representing the ``torsion part'' of the morphism $d_X$, that is, $\tau_X:\Weil(X)\to\Tors(\Cl(X))$,
    is given by
    \begin{equation}\label{Gamma}
        \Ga = {U_G}\cdot\ {_{s}W} \mod {\boldsymbol\tau}
    \end{equation}
    where this notation means that the $(k,j)$--entry of $\Ga$ is given by the class in $\Z/\tau_k\Z$ represented by the corresponding $(k,j)$--entry of ${^sU_G}\cdot\ {_{s}W}$, for every $1\leq k\leq s\,,\,1\leq j\leq n+r$.
\item[7.] Setting $\d_{\Si}:=\lcm\left(\det(Q_I):I\in\mathcal{I}^\Sigma\right)$
then
$$\d_{\Si}\mathcal{W}_T(X)\subseteq \mathcal{C}_T(X)\quad\text{and}\quad\d_{\Si}\mathcal{W}_T(Y)\subseteq \mathcal{C}_T(Y)$$
and there are the following divisibility relations
$$\d_{\Si}\ |\ [\Cl(Y):\Pic(Y)]=[\mathcal{W}_T(Y):\mathcal{C}_T(Y)]\ |\ [\Cl(X):\Pic(X)]= [\mathcal{W}_T(X):\mathcal{C}_T(X)]\,.$$
\end{itemize}
\end{theorem}

\begin{proof}
(2): Recalling relation (\ref{cartier}) in the proof of Proposition \ref{prop:CartierYX}, set
$$\forall\,I\in\mathcal{I}^\Si\quad\mathcal{P}^I=\{L=\sum_{j=1}^{n+r}a_jD_j\in \mathcal{W}_T(X)\ |\ \exists\,\mathbf{m}\in M : \forall\,j\not\in I\ \mathbf{m}\cdot\v_j=a_j\}.$$
Then $\mathcal{P}^I$ contains $\mathrm{Im}(div_X:M\to\Weil(X))=\mathcal{L}_c\left(V^T\right)$ and a $\Z$-basis of $\mathcal{P}^I$ is given by
$$\{D_j, j\in I\}\cup\{\sum_{k=1}^{n+r}v_{ik}D_k, i=1,\ldots ,n\},$$
where $\{v_{ik}\}$ is the $i$-th entry of $\mathbf{v}_k$, so giving the rows of the matrix $\widetilde{V}_I$ defined in the statement.

\noindent (3): By definition $$\Pic(X)=\mathrm{Im}(\Cart(X)\hookrightarrow\Weil(X)\stackrel{d_X}{\to}\Cl(X))$$
so that $\Pic(X)$ is generated by the image under $Q\oplus \Gamma$ of the transposed of the rows of $C_X$. Since $\rk(C_X)=n+r$ and $\rk(Q)=r$, the matrix $C_X\cdot Q^T$ has rank $r$ and therefore its $\HNF$ has the last $n-r$ rows equal to zero. Therefore the rows of the matrix $A\cdot C_X$ provide a basis of $\Cart(X)$ in $\Weil(X)$ such that its last $n$ rows are a basis of $\mathcal{L}_r(\widehat V)\cap \Cart(X)=\mathcal{L}_r( V)$. Since $\Pic(X)$ is free of rank $r$ it is freely generated by the images under $d_X$ of the first $r$ rows.

\noindent (6): A representative matrix of the torsion part $\tau_X:\Weil(X)\to\Cl(X)$ of the morphism $d_X$ is any matrix satisfying the following properties:
 \begin{itemize}
   \item[$(i)$] $\Ga=(\g_{kj})$ with $\g_{kj}\in\Z/\tau_k\Z$,
   \item[$(ii)$] $\Ga\cdot (^rU_Q)^T=\mathbf{0}_{s,r} \mod \boldsymbol\tau$, meaning that $\Ga$ kills the generators of the free part $F\leq\Cl(X)$ defined in display (4) of part 1 of \cite[Thm.~3.2]{RT-QUOT},
   \item[$(iii)$] $\Ga\cdot V^T=\mathbf{0}_{s,n} \mod \boldsymbol\tau$, where $V$ is a fan matrix satisfying condition   4.(b) in \cite[Thm.~3.2]{RT-QUOT}: this is due to the fact that the rows of $V$ span $\ker(d_X)$,
   \item[$(iv)$] $\Ga\cdot({_s\widehat{V}})^T=\mathbf{I}_s \mod \boldsymbol\tau$, since the rows of ${_s\widehat{V}}$ give the generators of $\Tors(\Cl(X))$, as in display (6) of part 5 of \cite[Thm.~3.2]{RT-QUOT}.
 \end{itemize}
 Therefore it suffices to show that the matrix $ {U_G}\cdot\ {_{s}W}$ in (\ref{Gamma}) satisfies the previous conditions $(ii)$, $(iii)$ and $(iv)$ without any reduction mod $\boldsymbol\tau$, that is,
 \begin{equation*}
    {U_G}\cdot\ {_{s}W}\cdot\ ({^{n+r-s}U})^T=\mathbf{0}_{s,n+r-s}\quad,\quad {U_G}\cdot\ {_{s}W}\cdot\ ({_s\widehat{V}})^T=\mathbf{I}_s\,.
 \end{equation*}
The first equation follows by the definition of $W$, in fact
\begin{equation*}
    W\cdot ({^{n+r-s}U})^T=\HNF\left(({^{n+r-s}U})^T\right)
    =\left(
                                                                                             \begin{array}{c}
                                                                                               \mathbf{I}_{n+r-s} \\
                                                                                               \mathbf{0}_{s,n+r-s} \\
                                                                                             \end{array}
                                                                                           \right)\,\Rightarrow\,{_{s}W}\cdot\ ({^{n+r-s}U})^T=\mathbf{0}_{s,n+r-s}
\end{equation*}
The second equation follows by the definition of $U_G$, in fact
\begin{equation*}
    U_G\cdot {_{s}W}\cdot\ ({_s\widehat{V}})^T= U_G\cdot G^T =\HNF(G^T)= \mathbf{I}_s\,.
\end{equation*}

\noindent (7): Part (4) of \cite[Thm.~2.9]{RT-LA&GD} gives that $\d_{\Si}\ |\  [\Cl(Y):\Pic(Y)]=[\Weil(Y):\Cart(Y)]$. On the other hand Proposition \ref{prop:CartierYX} gives that $[\mathcal{W}_T(Y):\mathcal{C}_T(Y)]\ |\ [\mathcal{W}_T(X):\mathcal{C}_T(X)]=[\Cl(X):\Pic(X)]$.
\end{proof}

\halfline
Considerations i, ii, iii, iv, v of \cite[Rem.~3.3]{RT-QUOT} still holds, while vi, vii and the remaining part of Remark 3.3 have to be replaced by the following

\begin{remark}\label{rem:} {\rm  \hfill
\begin{itemize}

\item[vi.] apply procedure \cite[\S~1.2.3]{RT-LA&GD}, based on the $\HNF$ algorithm, to get a $(n+r)\times (n+r)$ matrix $C_X$ whose rows give a basis of $\Cart(X)\leq\Weil(X)\cong\Z^{|\Si(1)|}$;
  \item[vii.] apply procedure described in part 6 of Theorem \ref{thm:generazione} to get a system of  generators of $\Pic(X)$ in $ \Cl(X)\,.$ Precisely, let $A\in\GL_{n+r}(\Z)$ be a switching matrix such that $\HNF(C_X\cdot Q^T)=A\cdot C_X \cdot Q^T$, and put
  \begin{equation}
  \label{eq:BX}
  B_X=\ ^r(A\cdot C_X \cdot Q^T),\quad \Theta_X=\ ^r{(A\cdot C_X \cdot \Gamma^T)}\end{equation} \\
 Then the rows of the matrices $B_X$ and $\Theta_X$ represent  respectively the free part and the torsion part of a basis of $\Pic(X)$ in $\Cl(X)$, where the latter is identified to $ \Z^r\oplus\bigoplus_{k=1}^s\Z/\tau_k\Z$.

Moreover:
\begin{itemize}
  \item recall that, for the universal 1--covering $Y$ of $X$, once fixed the basis $\{\widehat{D}_j\}_{j=1}^{n+r}$ of $\mathcal{W}_T(Y)\cong\Z^{n+r}$ and the basis $\{d_Y(\widehat{L}_i)\}_{i=1}^r$ of $\Cl(Y)\cong\Z^r$, (see (11) in \cite[Thm.~2.9]{RT-LA&GD}), one gets  the following commutative diagram
  \begin{equation*}
\def\objectstyle{\displaystyle}
\xymatrix{
& 0 \ar[d] && 0 \ar[d] && 0 \ar[d] & \\
0 \ar[r] & M \ar[rr]^-{\left(
                                         \begin{array}{c}
                                           \mathbf{0}_{n,r}\,|\,\mathbf{I}_n  \\
                                         \end{array}
                                       \right)}\ar@{=}[d] &&
\mathcal{C}_T(Y)\cong\Pic(Y)\oplus M \ar[rr]^-{\left(
                                         \begin{array}{c}
                                           \mathbf{I}_r\,|\,\mathbf{0}_{r,n}  \\
                                         \end{array}
                                       \right)}\ar[d]^-{C_Y^T} && {\Pic(Y)} \ar[r]\ar[d]^-{B_Y^T} & 0 \\
0 \ar[r] & M \ar[rr]^-{div_Y}_-{\widehat{V}^T}\ar[d] && \mathcal{W}_T(Y)=\bigoplus_{j=1}^{n+r} \Z \cdot D_{j}\ar[d]
\ar[rr]^-{d_Y}_-Q && \Cl(Y)\ar[d] \ar[r] & 0 \\
 & 0\ar[rr] && \mathcal{T}_Y\ar[d]\ar[rr]^-{\cong} && \mathcal{T}_Y\ar[r]\ar[d]&0\\
 & &&0&&0& }
\end{equation*}
where $B_Y$ is the $r\times r$ matrix constructed in \cite[Thm.~2.9(3)]{RT-LA&GD} and
\begin{equation*}
    C_Y=\begin{pmatrix}B_Y & \mathbf{0}_{r,n}\\  \mathbf{0}_{n,r}& \mathbf{I}_{n}\end{pmatrix}\cdot U_Q= \begin{pmatrix}B_Y\cdot\,^rU_Q\\  \widehat{V}\end{pmatrix}\,,
\end{equation*}
\item once fixed the basis $\{D_j\}_{j=1}^{n+r}$ for $\mathcal{W}_T(X)\cong\Z^{n+r}$ and the basis $\{d_X(L_i)\}_{i=1}^r$ of the free part $F\cong\Z^r$ of $\Cl(X)$, constructed in part 1 of \cite[Thm.~3.2]{RT-QUOT}, one gets  the following commutative diagram
  \begin{equation*}
\def\objectstyle{\displaystyle}
\xymatrix{
& 0 \ar[d] && 0 \ar[d] && 0 \ar[d] & \\
0 \ar[r] & M \ar[rr]^-{\left(
                                         \begin{array}{c}
                                           \mathbf{0}_{n,r}\,|\,\mathbf{I}_n  \\
                                         \end{array}
                                       \right)}\ar@{=}[d] &&
\mathcal{C}_T(X)\cong\Pic(X)\oplus M \ar[rr]^-{\left(
                                         \begin{array}{c}
                                           \mathbf{I}_r\,|\,\mathbf{0}_{r,n}  \\
                                         \end{array}
                                       \right)}\ar[d]^-{C_X^T} && {\Pic(X)} \ar[r]\ar[d]^-{B_X^T\oplus \Theta_X^T} & 0 \\
0 \ar[r] & M \ar[rr]^-{div_X}_-{V^T}\ar[d] && \mathcal{W}_T(X)=\bigoplus_{j=1}^{n+r} \Z \cdot D_{j}\ar[d]
\ar[rr]^-{d_X=f_X\oplus\tau_X}_-{Q\oplus\Ga} && \Cl(X)\ar[d] \ar[r] & 0 \\
 & 0\ar[rr] && \mathcal{T}_X\ar[d]\ar[rr]^-{\cong} && \mathcal{T}_X\ar[r]\ar[d]&0\\
 & &&0&&0& }
\end{equation*}

\end{itemize}

Moreover:
\begin{itemize}
\item recall the following commutative diagram of short exact sequences
  \begin{equation}\label{div-diagram-covering}
    \begin{array}{c}
      \xymatrix{&&&0\ar[d]&\\
& 0 \ar[d] & 0 \ar[d] & \ker(\overline{\a})=\Tors(\Cl(X)) \ar[d] & \\
0 \ar[r] & M \ar[r]^-{div_X}_-{V^T}\ar[d]_-{\b^T} &
\mathcal{W}_T (X)=\Z^{|\Si(1)|} \ar[r]^-{d_X}\ar[d]^-{\a}_-{\mathbf{I}_{n+r}} & \Cl(X) \ar[r]\ar[d]^-{\overline{\a}} & 0 \\
0 \ar[r] & M \ar[r]^-{div_Y}_-{\widehat{V}^T}\ar[d]&\mathcal{W}_T(Y)=\Z^{|\widehat{\Si}(1)|}\ar[r]^-{d_Y}\ar[d] & \Cl (Y) \ar[r]\ar[d] & 0 \\
 & \coker(\b^T)\cong\Tors(\Cl(X))\ar[d] & 0 & 0 & \\
 &0&&&}
    \end{array}
\end{equation}
\end{itemize}
then, putting all together, one gets the following 3--dimensional commutative diagram
%\begin{landscape}
\begin{equation}\label{diagramma3D}
\begin{array}{c}
  \xymatrix{M\ar@{=}[dddd]\ar@{^{(}->}[rrr]^-{div_X}_-{\left(
                                         \begin{array}{c}
                                           \mathbf{0}_{n,r}\,|\,\mathbf{I}_n  \\
                                         \end{array}
                                       \right)}\ar@{^{(}->}[dr]^-{\b^T}&&&\Cart(X)\ar@{^{(}->}[dr]^-{\a_|}_>>>>>>{(C_X\cdot C_Y^{-1})^T}\ar@{->>}[rrr]^-{{d_X}_|}_-{\left(
                                         \begin{array}{c}
                                           \mathbf{I}_r\,|\,\mathbf{0}_{r,n}  \\
                                         \end{array}
                                       \right)}\ar@{^{(}->}[dddd]_>>>>>>>>>>>>>{C_X^T}&&&
  \Pic(X)\ar@{^{(}->}[dr]^-{\overline{\a}_|}_>>>>>>{(B_X\cdot B_Y^{-1})^T}\ar@{^{(}->}[dddd]\ar@{^{(}->}[dddd]_>>>>>>>>>>>>>>>{B_X^T\oplus\Theta_X^T}&&&&\\
             &M\ar@{^{(}->}[rrr]^-{div_Y}_-{\left(
                                         \begin{array}{c}
                                           \mathbf{0}_{n,r}\,|\,\mathbf{I}_n  \\
                                         \end{array}
                                       \right)}\ar@{=}[dddd]\ar@{->>}[dr]&&&\Cart(Y)\ar@{->>}[rrr]^-{{d_Y}_|}_-{\left(
                                         \begin{array}{c}
                                           \mathbf{I}_r\,|\,\mathbf{0}_{r,n}  \\
                                         \end{array}
                                       \right)}\ar@{^{(}->}[dddd]_-{C_Y^T}\ar@{->>}[dr]&&&\Pic(Y)\ar@{->>}[dr]\ar@{^{(}->}[dddd]_-{B_Y^T}&&\\
             &&\coker(\b^T)\ar@{=}[dddd]\ar@{^{(}->}[rrr]&&&\coker(\a_|)\ar@{->>}[rrr]&&&\coker(\overline{\a}_|)\\
             &&&&&\ker(\overline{\a})\ar@{^{(}->}[dddd]\ar@{^{(}->}[dr]&&&&\\
             M\ar@{^{(}->}[rrr]^-{div_X}_-{V^T}\ar@{^{(}->}[dr]^-{\b^T}&&&\Weil(X)\ar@{->>}[dddd]\ar@{->>}[rrr]^-{d_X=f_X\oplus\tau_X}_-{Q\oplus\Ga}\ar[dr]^-{\a}_{\mathbf{I}_{n+r}}&&&\Cl(X)\ar@{->>}[dr]^-{\overline{\a}}_-{\mathbf{I}_r\oplus\mathbf{0}_r}\ar@{->>}[dddd]&&&&\\
             &M\ar@{^{(}->}[rrr]^-{div_Y}_-{\widehat{V}^T}\ar@{->>}[dr]&&&\Weil(Y)\ar@{->>}[dddd]\ar@{->>}[rrr]^-{d_Y}_-{Q}&&&\Cl(Y)\ar@{->>}[dddd]&&\\
             &&\coker(\b^T)&&&&&&\\
             &&\mathcal{K}\ar@{^{(}->}[dr]\ar[rrr]_-{\cong}&&&\mathcal{K}\ar@{^{(}->}[dr]&&\\
             &&&\mathcal{T}_X\ar@{->>}[dr]\ar[rrr]_-{\cong}&&&\mathcal{T}_X\ar[dr]&&\\
             &&&&\mathcal{T}_Y\ar[rrr]_-{\cong}&&&\mathcal{T}_Y}
\end{array}
\end{equation}
%\end{landscape}
The Snake Lemma implies
\begin{eqnarray*}
% \nonumber to remove numbering (before each equation)
  \coker(\b^T)&\cong&\ker(\overline{\a})\cong\Tors(\Cl(X))\\
  \mathcal{K}&\cong&\coker(\a_|)\cong\Cart(Y)/\Cart(X)
\end{eqnarray*}
so giving the following short exact sequences on torsion subgroups
\begin{equation}\label{torsione}
  \xymatrix{&&0\ar[d]&\\
            0\ar[r]&\Tors(\Cl(X))\ar[r]&\Cart(Y)/\Cart(X)\ar[r]\ar[d]&\Pic(Y)/\Pic(X)\ar[r]&0\\
            &&\Cl(X)/\Pic(X)\ar[d]&\\
            &&\Cl(Y)/\Pic(Y)\ar[d]&\\
            &&0&}
\end{equation}
\end{itemize}}
\end{remark}

\setcounter{section}{5}
\setcounter{theorem}{0}
For what concerns the examples given in section 5, considerations related with parts v, vi and vii of Remark~\ref{rem:} have to be replaced as follows

\begin{example}\label{ex:K2}{\rm \hfill

\noindent v. A matrix $W\in\GL_4(\Z)$ such that $\HNF\left(({^3U})^T\right)=W\cdot ({^3U})^T$ is given by
\begin{equation*}
    W=\left(
          \begin{array}{cccc}
            1 & 0 & 0 & 0 \\
            1 & 0 & 1 & -2 \\
            0 & 1 & -3 & 2 \\
            0 & 0 & 1 & -1 \\
          \end{array}
        \right)
\end{equation*}
giving
\begin{eqnarray*}
    % \nonumber to remove numbering (before each equation)
      G &:=& {_1\widehat{V}}\cdot\ ({_{1}W})^T =\left(
                                                  \begin{array}{c}
                                                    1  \\
                                                  \end{array}
                                                \right)
       \\
    \end{eqnarray*}
Therefore
$$\Ga= {_{1}W} \mod 5= \left(
          \begin{array}{cccc}
            [0]_5 & [4]_5 & [2]_5 & [1]_5 \\
          \end{array}
        \right)\,.$$
Consequently display (16) in \cite{RT-QUOT}, giving the action of $\Hom(\Tors(\Cl(X)),\C^*)\cong \mu_5$ on $Y=\P^3$, should be replaced by the following (equivalent) one:
\begin{equation}\label{azione}
    \begin{array}{ccc}
       \mu_5\times\P^3 & \longrightarrow & \P^3 \\
       (\varepsilon,[x_1:\ldots :x_4]) & \,\mapsto & \left[x_1:\varepsilon^4 x_2:\varepsilon^2 x_3:\varepsilon x_4\right] \ .
     \end{array}
\end{equation}

\noindent vi. Applying procedure \cite[\S~1.2.3]{RT-LA&GD} as described in part 2 of Theorem \ref{thm:generazione}, one gets a $4\times 4$ matrix $C_X$ whose rows give a basis of $\Cart(X)$ inside $\Weil(X)\cong\Z^{|\Si(1)|}$. Namely
\begin{equation*}
    C_X=\left(
                                    \begin{array}{cccc}
                                      5 & 0 & 0 & 0 \\
                                      0 & 5 & 0 & 0 \\
                                      -3 & -3 & 1 & 0 \\
                                      -2 & -4 & 0 & 1 \\
                                    \end{array}
                                  \right)
\end{equation*}
meaning that $$\Cart(X)=\mathcal{L}\left( 5D_1,5D_2,-3D_1-3D_2+D_3,-2D_1-4D_2+D_4\right)\,.$$

\noindent On the other hand, by part (3) of \cite[Thm.~2.9]{RT-LA&GD}, a basis of
$\Cart(Y)\subseteq\Weil(Y)$ is given by the rows of
$$C_Y=\mathbf{I}_4\cdot U_Q=U_Q\in\GL_n(\Z)$$
giving $\Cart(Y)=\Weil(Y)$, as expected for $Y=\P^3$.

\noindent vii. A basis of $\Pic(X)$ inside $\Cl(X)$ is then obtained by applying part 6 of Theorem \ref{thm:generazione}. With the notation of Remark \ref{rem:} vii, a switching matrix $A$ such that $A\cdot C_X\cdot Q^T$ is in $\HNF$ is
$$A=\left(
                                    \begin{array}{cccc}
                                      1 & 0 & 0 & 0 \\
                                      -1 & 1 & 0 & 0 \\
                                      1 & 0 & 1 & 0 \\
                                      1 & 0 & 0 & 1 \\
                                    \end{array}
                                  \right)$$

\noindent so that

$$
B_X=\ ^1(A\cdot C_X\cdot Q^T) =\left(
      \begin{array}{c}
        5 \\
      \end{array}
    \right)
$$
$$
\Theta_X=\ ^1(A\cdot C_X\cdot \Gamma^T) =\left(
      \begin{array}{c}
        0 \\
      \end{array}
    \right)
$$
Then
$$\Pic(X)\cong \Z[5d_X(D_1)]\leq\Z[d_X(D_1)]\oplus\Z/5\Z[d_X(D_3-D_4)]\cong\Cl(X)\ \Rightarrow\ \Cl(X)/\Pic(X)\cong\Z/5\Z\oplus\Z/5\Z\,.
$$}
\end{example}

\begin{example}\label{ex:} {\rm \hfill

\noindent v. A matrix $U$ as defined in part 6 of Theorem \ref{thm:generazione} is given by
\begin{equation*}
    U=\left(
        \begin{array}{c}
          ^2U_Q \\
          \widehat{V}' \\
        \end{array}
      \right)
    =\left(
          \begin{array}{cccccc}
          2&-1&0&0&0&0\\
          -6&3&1&0&0&0\\
            521&-251&-168&-2&14&28\\
            388&-222&-112&7&45&3\\
            -184&105&53&-2&-23&-1\\
            191&-109&-55&2&24&1\\
          \end{array}
        \right)
\end{equation*}
A matrix $W\in\GL_6(\Z)$ such that $\HNF(({^4U})^T)=W\cdot(({^4U})^T)$ is given by
\begin{equation*}
    W=\left(
          \begin{array}{cccccc}
            -57&-115&3&-549&17&0\\
            4&8&1&3&7&0\\
            -125&-250&0&-1090&14&0\\
            -170&-340&0&-1482&19&0\\
            -188&-376&0&-1639&21&0\\
            -126&-252&0&-1092&13&1\\
          \end{array}
        \right)
\end{equation*}
then
\begin{equation*}
    G={_2\widehat{V}'}\cdot ({_2W})^T = \left(
                                          \begin{array}{cc}
                                            -2093&-1392\\
                                            2302&1531\\
                                          \end{array}
                                        \right)
\end{equation*}
A matrix $U_G\in\GL_2(\Z)$ such that $\HNF(G^T)=U_G\cdot G^T$ is given by
\begin{equation*}
    U_G=\left(
          \begin{array}{cc}
            1531&-2302\\
            1392&-2093\\
          \end{array}
        \right)
\end{equation*}
hence giving
\begin{eqnarray*}
  \Ga &=& {U_G}\ \cdot\ {_2W} \mod \boldsymbol\tau \\
  &=& \left(
                                                      \begin{array}{cccccc}
                                                        2224&4448&0&4475&2225&-2302\\ 2022&4044&0&4068&2023&-2093
                                                      \end{array}
                                                    \right)\mod \left(
                                                                  \begin{array}{c}
                                                                    3 \\
                                                                    15 \\
                                                                  \end{array}
                                                                \right)\\
   &=&
   \left(
          \begin{array}{cccccc}
          [1]_3&[2]_3   &[0]_3   &[2]_3   &[2]_3    &[2]_3\\
          {[12]_{15}} &[9]_{15}&[0]_{15}&[3]_{15}&[13]_{15}&[7]_{15}\\
          \end{array}
        \right)
\end{eqnarray*}
Consequently display (20) in \cite{RT-QUOT} should be replaced by the following (equivalent) one
\begin{eqnarray}\label{azione_g}
    &&g\left(((t_1,t_2),\varepsilon,\eta),(x_1,\ldots :x_6)\right):=\\
    \nonumber
    &&\left(t_1^2t_2\varepsilon\eta^{12}\ x_1,t_1^4t_2\varepsilon^2\eta^9\  x_2,t_1t_2^3 \ x_3, t_1^5t_2^2\varepsilon^2\eta^3\ x_4,t_1^4t_2^3\varepsilon^2\eta^{13}\ x_5,t_1^3t_2^7\varepsilon^2\eta^7\ x_6\right)
\end{eqnarray}

\noindent vi. Depending on the choice of the fan $\Si_i\in\SF(V)$, by applying procedure \cite[\S~1.2.3]{RT-LA&GD} as described in part 2 of Theorem \ref{thm:generazione}, one gets a $6\times 6$ matrix $C_{X,i}$ whose rows give a basis of $\Cart(X_i)$ inside $\Weil(X_i)\cong\Z^{|\Si_i(1)|}$. Namely
\begin{equation*}
    C_{X,1}=\left(\begin {array}{cccccc}
    265926375&0&0&0&0&0\\
    -148978500&825&0&0&0&0\\
    -58474020&-375&15&0&0&0\\
    37&-18&-7&1&0&0\\
    -58473933&-417&-3&0&3&0\\
    19&-8&-5&0&-1&1\end {array}
                                                                                                 \right)
\end{equation*}
\begin{equation*}
    C_{X,2}=\left( \begin {array}{cccccc}
    43543500&0&0&0&0&0\\
    -34716000&15&0&0&0&0\\
    -594165&0&30&0&0&0\\
    -34715963&-3&-7&1&0&0\\
    17655087&-12&-18&0&3&0\\
    19&-8&-5&0&-1&1
    \end {array} \right)
\end{equation*}
\begin{equation*}
    C_{X,3}=\left(\begin {array}{cccccc}
    43543500&0&0&0&0&0\\
    -37009500&825&0&0&0&0\\
    -6534165&-750&30&0&0&0\\
    37&-18&-7&1&0&0\\
    87&-42&-18&0&3&0\\
    19&-8&-5&0&-1&1\end {array}
                                                                                                 \right)
\end{equation*}

\noindent vii. A basis of $\Pic(X_i)$ inside $\Cl(X_i)$ is then  obtained by applying part 6 of Theorem \ref{thm:generazione}. For $i=1,2,3$, matrices $A_i$ switching  $C_{X_i}\cdot Q^T$ in Hermite normal form are respectively
\begin{equation*}
A_1=\left(
      \begin{array}{cccccc}
       -351039&-449987&-449987&0&0&0\\
       -502913&-644670&-644670&0&0&0\\
       1&1&2&0&0&0\\
       0&0&0&1&0&0\\
       1&1&1&0&1&0\\
       0&0&0&0&0&1
      \end{array}
    \right)\end{equation*}

\begin{equation*}
A_2=\left(
      \begin{array}{cccccc}
       -93838&-117699&0&0&0&0\\
       -1157199&-1451450&0&0&0&0\\
       4&5&1&0&0&0\\
       0&-1&0&1&0&0\\
       -2&-2&0&0&1&0\\
       0&0&0&0&0&1
      \end{array}
    \right)\end{equation*}

    \begin{equation*}
    A_3=\left(
          \begin{array}{cccccc}
          -10317&-12139&0&0&0&0\\
          -22429&-26390&0&0&0&0\\
          1&1&1&0&0&0\\0&0&0&1&0&0\\
          0&0&0&0&1&0\\
          0&0&0&0&0&1
          \end{array}
        \right)\end{equation*}

giving
\begin{eqnarray*}
% \nonumber to remove numbering (before each equation)
  B_{X_1} &=& \ ^2(A_1\cdot C_{X_1}\cdot Q^T) =\left(
      \begin{array}{cc}
       825&185620050\\
      0&265926375
      \end{array}
    \right) \\
  B_{X_2} &=&  \ ^2(A_2\cdot C_{X_2}\cdot Q^T) =\left(
      \begin{array}{cc}
       60&1765515\\
       0&21771750
      \end{array}
    \right) \\
  B_{X_3} &=&  \ ^2(A_3\cdot C_{X_3}\cdot Q^T) =\left(
      \begin{array}{cc}
       3300&10016325\\
       0&21771750
      \end{array}
    \right)\\
  \Theta_{X_i}&=&   \ ^2(A_i\cdot C_{X_i}\cdot \Ga^T) =\left(
                                                         \begin{array}{cc}
                                                           \,[0]_3 & [0]_{15} \\
                                                           \,[0]_3 & [0]_{15} \\
                                                         \end{array}
                                                       \right)\ ,\quad\text{for $i=1,2,3$\,.}
\end{eqnarray*}
}
\end{example}

\end{document}